\documentclass[a4paper,11pt]{article}
\usepackage{graphicx}
\usepackage{amssymb}
\usepackage[french, english]{babel}
\usepackage[utf8]{inputenc}
\usepackage{amsmath,amsfonts,amssymb,amsthm}
\usepackage[T1]{fontenc}
\usepackage{fullpage}
\usepackage{hyperref}
\usepackage{mathrsfs}
\usepackage{relsize}
\usepackage{tikz}
\usepackage{bbm}
\usepackage{appendix}
\usepackage{pifont}
\usetikzlibrary{patterns}

\selectlanguage{english}

\definecolor{darkgreen}{rgb}{0.0, 0.5, 0.0}

\newcommand{\eps}{\varepsilon}

\newcommand{\loc}{\mathrm{loc}}

\newcommand{\T}{\mathcal{T}}

\newcommand{\TT}{\mathbb{T}}
\newcommand{\N}{\mathbb N}

\newcommand{\R}{\mathbb R}

\newcommand{\E}{\mathbb E}

\renewcommand{\P}{\mathbb P}

\newcommand{\occ}{\mathrm{occ}}

\linespread{1.1}

\theoremstyle{definition}

\newtheorem{thm}{Theorem}

\newtheorem{defn}{Definition}

\newtheorem{prop}[defn]{Proposition}
\newtheorem{corr}[defn]{Corollary}

\newtheorem{lem}[defn]{Lemma}
\newtheorem{conj}[defn]{Conjecture}

\tikzstyle{every node}=[circle, draw, fill=black!50, inner sep=0pt, minimum width=4pt]
\tikzstyle{white}=[circle, draw, fill=black!0, inner sep=0pt, minimum width=4pt]
\tikzstyle{bigwhite}=[circle, draw, fill=black!0, inner sep=0pt, minimum width=10pt]
\tikzstyle{dual}=[circle, draw=blue, fill=black!0, inner sep=0pt, minimum width=4pt]
\tikzstyle{fat}=[circle, draw, fill=red!50, inner sep=0pt, minimum width=8pt]
\tikzstyle{fat_bis}=[circle, draw, fill=blue!50, inner sep=0pt, minimum width=8pt]
\tikzstyle{fat_ter}=[circle, draw, fill=green!50, inner sep=0pt, minimum width=8pt]
\tikzstyle{rouge}=[circle, draw, fill=red, inner sep=0pt, minimum width=7pt]
\tikzstyle{bleu}=[circle, draw, fill=blue, inner sep=0pt, minimum width=7pt]
\tikzstyle{petitrouge}=[circle, draw, fill=red, inner sep=0pt, minimum width=4pt]
\tikzstyle{petitbleu}=[circle, draw, fill=blue, inner sep=0pt, minimum width=4pt]
\tikzstyle{texte}=[draw=none, fill=none]

\title{\bf{Multi-ended Markovian triangulations and robust convergence to the UIPT}}
\author{Thomas \bsc{Budzinski}}

\begin{document}

\maketitle

\begin{abstract}
We classify completely the infinite, planar triangulations satisfying a weak spatial Markov property, without assuming one-endedness nor finiteness of vertex degrees. In particular, the Uniform Infinite Planar Triangulation (UIPT) is the only such triangulation with average degree $6$.
As a consequence, we prove that the convergence of uniform triangulations of the sphere to the UIPT is robust, in the sense that it is preserved under various perturbations of the uniform measure. As another application, we obtain large deviation estimates for the number of occurencies of a pattern in uniform triangulations.
\end{abstract}

\section*{Introduction}

\paragraph{Local limits of random triangulations.}
Local limits of random planar maps have been the object of extensive study in the last fifteen years. The starting point of this theory was the convergence of large uniform triangulations to the Uniform Infinite Planar Triangulation (UIPT) proved by Angel and Schramm~\cite{AS03}. Since then, similar results have been proved for many other classes of maps such as quadrangulations~\cite{CD06}, or maps with Boltzmann weights on the face degrees (\cite{BS13} in the bipartite case and \cite{St18} in the general case). See also~\cite{C-StFlour} for a complete survey. All these results rely heavily on a very good understanding of the combinatorics of finite maps. More precisely, the proofs use either exact enumeration results going back to Tutte such as~\cite{Tut62}, or bijections with simpler objects such as labelled trees \cite{CV81}.

\paragraph{Spatial Markov property.}
The UIPT exhibits fractal-like properties (e.g. volume growth in $r^4$) whose study was started by Angel in~\cite{Ang03}, relying on an exploration procedure called \emph{peeling}. The key feature of the UIPT allowing to perform such explorations is its \emph{spatial Markov property}: the probability to observe a finite triangulation $t$ as a neighbourhood of the root (we give precise definitions in Section~\ref{subsec_defns} below) in the UIPT only depends on the perimeter and volume of $t$, and not on its geometry. This guarantees that during an exploration of the UIPT, the perimeter and volume of the explored region follow a Markov chain, which reduces the study of the UIPT to the analysis of an explicit Markov chain with values in $\N^2$. Moreover, the spatial Markov property is easy to observe on many natural finite models (such as uniform triangulations of the sphere with a fixed size) and passes well to local limits. We can therefore expect the limits of many natural finite map models to exhibit the spatial Markov property.

\paragraph{Planar Stochastic Hyperbolic Triangulations.}
Partly motivated by this remark, Curien introduced\footnote{The PSHT defined in~\cite{CurPSHIT} are the type-II PSHT, i.e. where multiple edges are allowed but self-loops are forbidden. The adaptation to the type-I case (with both multiple edges and self-loops) was done in~\cite{B16}.} in~\cite{CurPSHIT} a family of random infinite triangulations of the plane called Planar Stochastic Hyperbolic Triangulations (PSHT) (see also~\cite{AR13} for similar objects in a half-plane setting). The PSHT form a one-parameter\footnote{In~\cite{CurPSHIT,B16}, the construction is only given for $\lambda \in (0,\lambda_c]$. The extension to the case $\lambda=0$, where vertices have infinite degrees, will be done carefully in the present work, see Section~\ref{sec_prelim}.} family $\left( \TT_{\lambda} \right)_{0 \leq \lambda \leq \lambda_c}$, where $\lambda_c=\frac{1}{12\sqrt{3}}$ and $\TT_{\lambda_c}$ is just the UIPT. They are characterized by the following equation: if $t$ is a triangulation with one hole of perimeter $p$ and inner volume $v$, then
\[ \P \left( t \subset \TT_{\lambda} \right) = C_p(\lambda) \times \lambda^v, \]
where $t \subset T$ means that $t$ is a neighbourhood of the root in $T$, and the constants $C_p(\lambda)$ are explicit. In particular, the $\TT_{\lambda}$ satisfy the spatial Markov property. For $\lambda<\lambda_c$, the PSHT $\TT_{\lambda}$ exhibit a hyperbolic behaviour contrasting sharply with the UIPT (e.g. exponential volume growth, positive speed of the simple random walk)~\cite{CurPSHIT}. The PSHT were proved in~\cite{BL19} to be the local limits of uniform triangulations with genus proportional to their sizes.

\paragraph{Characterizing Markovian triangulations.}
An important step of the argument of~\cite{BL19} was the following characterization.

\begin{prop}\cite[Theorem 2]{BL19}\label{prop_oneended_case}
Let $T$ be a random, infinite, one-ended planar triangulation with finite vertex degrees. If $T$ satisfies the spatial Markov property, then $T$ is a mixture of PSHT, i.e. $T$ is of the form $\TT_{\Lambda}$, where $\Lambda$ is a random variable with values in $(0,\lambda_c]$.
\end{prop}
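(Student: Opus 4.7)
The plan is to exploit the spatial Markov property by running a canonical peeling exploration of $T$. Thanks to one-endedness together with the finiteness of vertex degrees, this exploration is well-defined for all times, and any finite triangulation $t$ with one hole either does or does not sit at its beginning; the Markov hypothesis then rephrases as the statement that $\phi(p,v) := \P(t \subset T)$ depends only on the perimeter $p$ and the volume $v$ of $t$.

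My first step would be to derive a master recursion on $\phi$. Fix a deterministic peeling rule; starting from a hole of perimeter $p$ and inner volume $v$, each peeling step either reveals a fresh vertex (so the new parameters become $(p+1,v+1)$) or identifies the apex of the revealed triangle with an existing boundary vertex, splitting the hole. In the one-ended setting, the finite side of any such split is immediately filled by an honest triangulation of some $k$-gon with some $w$ inner vertices, while the exploration continues on the infinite side. Summing the probabilities of these outcomes to $1$ yields an equation of the form
\[
\phi(p,v) = \phi(p+1, v+1) + \sum_{k,w \geq 0} N_{k,w}\, \phi(p-k-1,\, v+w+c),
\]
where $N_{k,w}$ counts rooted triangulations of a $k$-gon with $w$ inner vertices and $c$ is a small integer offset accounting for the revealed triangle.

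Next, I would look for elementary solutions via the separation ansatz $\phi(p,v) = C_p \lambda^v$. This reduces the master equation to a Tutte-type linear system on $(C_p)_{p \geq 1}$ whose coefficients are the enumeration series $F_k(\lambda) := \sum_w N_{k,w}\lambda^w$. These series are classical and converge precisely on $[0, \lambda_c]$; for each such $\lambda$, the system admits, up to normalization, a unique positive solution $C_p(\lambda)$, which is exactly the one defining the PSHT $\TT_\lambda$ of~\cite{CurPSHIT,B16}. Any convex combination of such elementary solutions is again a solution of the master equation, and by construction corresponds to a mixture of PSHT.

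The hard part will be to show that these mixtures \emph{exhaust} the set of solutions. My approach is to fix $p$ and study the sequence $v \mapsto \phi(p,v)$. Iterating the recursion backwards and exploiting positivity of each $\phi$-value, I expect to check that its higher-order backward differences are all non-negative, so that Hausdorff's moment theorem furnishes a non-negative measure $\mu_p$ on $[0,\lambda_c]$ with $\phi(p,v) = \int \lambda^v\, d\mu_p(\lambda)$. Substituting this integral representation back into the master equation and invoking uniqueness of the Stieltjes transform, one forces the relation $d\mu_p(\lambda) = C_p(\lambda)\, d\mu(\lambda)$ for a single probability measure $\mu$ independent of $p$. The value $\lambda = 0$ is ruled out by the finite-degree hypothesis (see Section~\ref{sec_prelim}), so $\mu$ is supported in $(0,\lambda_c]$, and taking $\Lambda \sim \mu$ gives $T \stackrel{d}{=} \TT_\Lambda$ as claimed.
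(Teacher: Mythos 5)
Your sketch follows essentially the same route as~\cite{BL19} (and as Section~\ref{subsec_solve_peeling} of this paper for the multi-ended extension): write the peeling equations, apply the Hausdorff moment theorem to express $\P(t\subset T)$ as $\E[\Lambda^v C_p(\Lambda)]$ via complete monotonicity, identify $C_p(\lambda)$ through the Tutte-type recursion with the PSHT coefficients, and rule out $\lambda=0$ by finite degrees. The only point I would stress is that the "hard part" you flag — nonnegativity of all higher backward differences in $v$ — is not obtainable by fixing $p$ and exploiting positivity alone; it requires a joint induction over all $p$ using the peeling equations themselves (compare the proof of Lemma~\ref{lem_apv_as_moments}), but since you identify this as the key step and your surrounding structure matches the paper's, the plan is sound.
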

 
Indeed, once tightness and planarity and one-endedness of the limits are established, this result ensures that any subsequential limit is a mixture of PSHT. However, although~\cite{BL19} does not rely on precise combinatorial asymptotics on the finite models, checking the assumptions of one-endedness and planarity on subsequential limits still required a strong combinatorial input, namely the Goulden--Jackson recursion of~\cite{GJ08}. The goal of the present work is to remove the assumptions of one-endedness and finite vertex degrees in Proposition~\ref{prop_oneended_case}, and therefore to make the strategy of~\cite{BL19} less reliant on the combinatorial understanding of the models. However, the results presented here do not allow to shorten significantly the proof of~\cite{BL19} (this is discussed in Section~\ref{sec_conj}).

\paragraph{Triangulations with infinite vertex degrees.}
In this work, an infinite triangulation will be a connected, planar gluing of a countable collection of triangles along their vertices and edges (see Section~\ref{subsec_defns} for a more formal definition). We allow loops and multiple edges and, importantly, we do not require the vertex degrees to be finite, which is quite unusual in the literature. The point of this extended definition will be to allow us to "skip" the tightness step in the proofs of local convergence results. In particular, given two corners $c_1$ and $c_2$ incident to the same vertex $v$, it may be necessary to cross infinitely many edges to go from $c_1$ to $c_2$ in a small neighbourhood of $v$ (see the right part of Figure~\ref{fig_two_degenerate_examples}).

In the context of multi-ended triangulations, we need to extend the definition of the spatial Markov property. We say that a random infinite, rooted, planar triangulation $T$ is \emph{Markovian} if for any finite triangulation $t$ with holes, the probability $\P \left( t \subset T \right)$ only depends on the perimeters of the holes of $t$ and on its total number of faces.

We finally introduce two important examples of "degenerate" planar triangulations $\TT_0$ and $\TT_{\star}$ with infinite vertex degrees. We denote by $\TT_0$ the triangulation obtained by gluing the edges of the triangles along the structure of a complete binary tree (as on the left of Figure~\ref{fig_two_degenerate_examples}). This is also the natural way to extend the PSHT $\TT_{\lambda}$ to the case $\lambda=0$. On the other hand $\TT_{\star}$ (on the right of Figure~\ref{fig_two_degenerate_examples}) is the only infinite, planar triangulation with only one vertex: it is obtained by forming a triangle with three loops, and then recursively adding two loops inside of each loop to form new triangles. We highlight that our definition considers $\TT_0$ and $\TT_{\star}$ as two \emph{distinct} objects, even if they have the same dual. Indeed, the vertices are not glued in the same way in $\TT_0$ and in $\TT_{\star}$. We are now able to state our main theorem.

\begin{figure}
\begin{center}
\begin{tikzpicture}
\draw(90:1) -- (210:1)--(330:1)--(90:1);
\draw[red, thick, ->] (210:1)--(0,-0.5);
\draw[red, thick] (210:1)--(330:1);
\draw(90:1)--(150:1.5)--(210:1)--(270:1.5)--(330:1)--(30:1.5)--(90:1);
\draw(30:1.5)--(60:1.6)--(90:1)--(120:1.6)--(150:1.5)--(180:1.6)--(210:1)--(240:1.6)--(270:1.5)--(300:1.6)--(330:1)--(0:1.6)--(30:1.5)--(60:1.6)--(90:1);
\draw(90:1)--(100:1.6)--(120:1.6)--(135:1.9)--(150:1.5)--(165:1.9)--(180:1.6)--(200:1.6)--(210:1);
\draw(210:1)--(220:1.6)--(240:1.6)--(255:1.9)--(270:1.5)--(285:1.9)--(300:1.6)--(320:1.6)--(330:1);
\draw(330:1)--(340:1.6)--(0:1.6)--(15:1.9)--(30:1.5)--(45:1.9)--(60:1.6)--(80:1.6)--(90:1);

\draw(90:1)node{};
\draw(210:1)node{};
\draw(330:1)node{};
\draw(150:1.5)node{};
\draw(270:1.5)node{};
\draw(30:1.5)node{};

\draw(60:1.6)node{};
\draw(120:1.6)node{};
\draw(180:1.6)node{};
\draw(240:1.6)node{};
\draw(300:1.6)node{};
\draw(0:1.6)node{};

\draw(80:1.6)node{};
\draw(100:1.6)node{};
\draw(200:1.6)node{};
\draw(220:1.6)node{};
\draw(320:1.6)node{};
\draw(340:1.6)node{};

\draw(15:1.9)node{};
\draw(45:1.9)node{};
\draw(135:1.9)node{};
\draw(165:1.9)node{};
\draw(255:1.9)node{};
\draw(285:1.9)node{};

\begin{scope}[shift={(8,-1)}]
\draw(0,0) to[out=180,in=247.5] (157.5:1.4) to[out=67.5,in=135] (0,0);
\draw(0,0) to[out=135,in=202.5] (112.5:1.4) to[out=22.5,in=90] (0,0);
\draw(0,0) to[out=90,in=157.5] (67.5:1.4) to[out=-22.5,in=45] (0,0);
\draw(0,0) to[out=45,in=112.5] (22.5:1.4) to[out=-67.5,in=0] (0,0);

\draw[red, thick](0,0) to[out=180,in=270] (160:1.7) to[out=90,in=180] (110:1.7) to[out=0,in=90] (0,0);
\draw[red, thick,->](0,0) to[out=180,in=270] (160:1.7) to[out=90,in=180] (110:1.7);
\draw(0,0) to[out=90,in=180] (70:1.7) to[out=0,in=90] (20:1.7) to[out=-90,in=0] (0,0);

\draw(0,0) to[out=195,in=270] (160:2) to[out=90,in=180] (90:2) to[out=0,in=90] (20:2) to[out=270,in=-15] (0,0);

\draw(0,0) to[out=-22.5,in=45] (-45:1.4) to[out=225.5,in=-67.5] (0,0);
\draw(0,0) to[out=210,in=270] (160:2.2) to[out=90,in=180] (90:2.2) to[out=0,in=90] (20:2.2) to[out=270,in=0] (-60:1.5) to[out=180,in=-90] (0,0);

\draw(0,0)node{};
\end{scope}
\end{tikzpicture}
\end{center}
\caption{The beginning of the construction of $\TT_0$ (on the left) and $\TT_{\star}$ (on the right). Note that these are two different triangulations: $\TT_0$ has infinitely many vertices whereas $\TT_{\star}$ has only one.}\label{fig_two_degenerate_examples}
\end{figure}
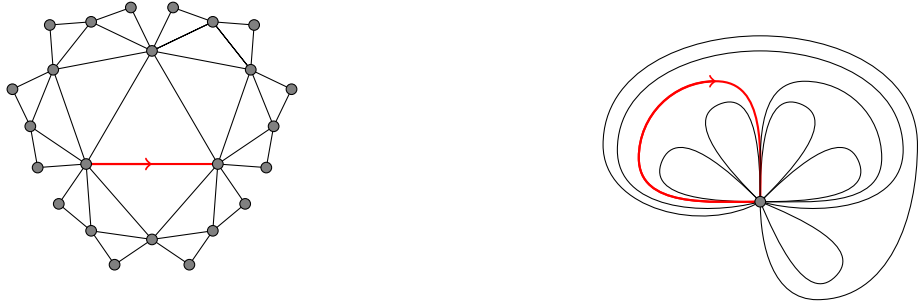

\begin{thm}\label{main_thm}
Let $T$ be a Markovian infinite, planar triangulation. Then $T$ is of the form $\TT_{\Lambda}$, where $\Lambda$ is a random variable with values in $[0,\lambda_c] \cup \{ \star \}$.
\end{thm}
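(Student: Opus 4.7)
The plan is to combine an ergodic decomposition with a case analysis on the structural type of $T$, invoking Proposition~\ref{prop_oneended_case} in the non-degenerate case.

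First, I would observe that the class of Markovian laws is a convex subset of probability measures on rooted infinite planar triangulations, since the defining relation $\P(t \subset T) = f(|\partial_1 t|,\ldots,|\partial_k t|,|t|)$ is linear in the law. By a Choquet-type disintegration, any Markovian law is a mixture of extremal Markovian laws, and the desired random variable $\Lambda$ will appear as the label of this disintegration. It therefore suffices to prove that every extremal Markovian law coincides with $\TT_\lambda$ for some $\lambda \in [0,\lambda_c]$ or with $\TT_\star$.

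Second, for an extremal Markovian $T$ I would show that $\P(t \subset T)$ takes the exponential form $C_{p_1,\ldots,p_k}\,\lambda^v$ for some $\lambda=\lambda(T)\geq 0$, where $v$ is the number of internal faces of $t$ and the $p_i$ the perimeters of its holes. The tool is a peeling exploration of $T$, whose perimeter/volume process is a Markov chain. Extremality forces the tail $\sigma$-algebra of this chain to be trivial; performing two peelings in parallel on disjoint boundary segments yields, via asymptotic independence, the multiplicative dependence in $v$. The combinatorial recursions induced by one peeling step then pin down the constants $C_{p_1,\ldots,p_k}$ up to the single parameter $\lambda$, in the same spirit as the characterization of the PSHT in~\cite{CurPSHIT}.

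It then remains to identify such an extremal $T$ with one of the claimed triangulations. If all vertex degrees are finite and $T$ is one-ended, Proposition~\ref{prop_oneended_case} applies directly and gives $T=\TT_\lambda$ for some $\lambda\in(0,\lambda_c]$. If $T$ has exactly one vertex, a direct combinatorial argument using planarity and the Markovian property should force $T=\TT_\star$. Otherwise $T$ has at least two vertices and at least one vertex of infinite degree, and an exploration near such a vertex, together with the rigidity of the peeling transitions, should force $T=\TT_0$ as on the left of Figure~\ref{fig_two_degenerate_examples}. Finally, multi-ended triangulations with finite vertex degrees must be excluded: here I would peel on two holes that escape to distinct ends and show that the resulting joint law is incompatible with the exponential form extracted in the previous step, because the perimeter of each end would evolve as an independent positive-drift chain whose joint moments cannot match $C_{p_1,p_2}\lambda^v$.

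The main obstacle will be handling infinite-degree vertices and the multi-ended case simultaneously, since the classical one-triangle-at-a-time peeling breaks down as soon as infinitely many triangles accumulate around a vertex. The technical heart of the proof will be to design a refined exploration that terminates in finite time almost surely even in the presence of infinite-degree vertices, and to identify a clean stopping-time criterion on the perimeter Markov chain that detects when the exploration is about to close around an infinite-degree vertex or escape into a distinct end; this is what allows the reduction to the three structural cases above and to the application of Proposition~\ref{prop_oneended_case}.
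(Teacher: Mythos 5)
Your proposal captures the right high-level goal (identify the mixing components and rule out the bad ones), but it misses the mechanism that makes the paper's proof work, and the parts you leave vague are precisely the hard parts.

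The Choquet/extremal-decomposition step does not buy you much on its own: a disintegration into extremal Markovian laws is only useful once you know what the extremal points are, and characterizing them is equivalent to the theorem you are trying to prove. The paper circumvents this by working directly with the coefficients $a_v^{p_1,\dots,p_k}=\P(t\subset_\infty T)$: after showing they are determined by the values at $p_1=\dots=p_k=1$, the peeling equations give complete monotonicity in both $v$ and the number of holes $k$, so the two-dimensional Hausdorff moment theorem yields $a_v^{k\otimes 1}=\E[\Lambda^v\Gamma^{k-1}]$ for a pair of random variables $(\Lambda,\Gamma)$ (Boltzmann weights on volume and on number of ends). This is where the "random variable $\Lambda$" comes from, not from Choquet theory on laws of triangulations. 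Your plan to derive the exponential form $C_{p_1,\dots,p_k}\lambda^v$ via parallel peelings and tail triviality is not worked out and would be delicate precisely where the paper must work hardest (infinite degrees, multiple ends); moreover nothing in your argument produces the second parameter $\Gamma$, without which you cannot even formulate what the multi-ended solutions look like.

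The structural case analysis is also shaky. An extremal Markovian law need not be a deterministic triangulation, so statements like "if $T$ has exactly one vertex" or "if $T$ has a vertex of infinite degree" are not clean dichotomies you can branch on. And the central claim — that an infinite-degree vertex forces $T=\TT_0$, or that multi-ended finite-degree triangulations are "incompatible with the exponential form" — is exactly the content one must prove, and you offer no mechanism for it. The paper's mechanism is entirely algebraic: it solves the peeling recursion explicitly, obtaining a closed form
\[ \mathcal{C}_{\lambda,\gamma}(x)=\tfrac{1}{2\gamma}\Bigl(\sqrt{(1-\tfrac{x}{\sqrt{1+8h}})^2(1-\tfrac{4h}{\sqrt{1+8h}}x)+4\gamma x}-\bigl(1-\tfrac{x}{\sqrt{1+8h}}\bigr)\sqrt{1-\tfrac{4h}{\sqrt{1+8h}}x}\Bigr), \]
and then uses Pringsheim's theorem together with the sign of a derivative at the dominant singularity to show that some $C_p(\lambda,\gamma)$ is \emph{negative} whenever $\lambda>0,\gamma>0$, and similarly to rule out $0<\gamma<1$ when $\lambda=0$. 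This negativity is what excludes the multi-ended/finite-degree and intermediate degenerate cases; your proposal contains no substitute for it. Finally, invoking Proposition~\ref{prop_oneended_case} for the one-ended finite-degree case is admissible, but it does not reduce the difficulty, since establishing one-endedness and finite degrees is what the rest of your sketch was supposed to do.
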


In particular, this result means that there is no natural notion of "uniform" multi-ended planar triangulation. This was already observed by Linxiao Chen for a much stronger version of the Markov property~\cite{ChPerso}.

\paragraph{Applications to the convergence of "perturbed" models to the UIPT.}
As explained above, the main motivation behind this result is to be able to prove the convergence of finite models to the UIPT without needing a good control on the combinatorics of these models. Indeed, a corollary of Theorem~\ref{main_thm} is that the UIPT is the only Markovian infinite planar triangulation with average vertex degree $6$. This characterization of the UIPT involves only properties that are usually easy to observe on finite models, and is therefore useful to prove the convergence of such models. Roughly speaking, the "meta-theorem" is that any "almost-uniform" triangulation model with an additional structure which is "small compared to the size" converges to the UIPT. As an illustration, we present three particular cases: triangulations with defects, with moderate genus, and with a very high temperature Ising model. Note that our approach is robust enough to treat a model where these three perturbations would be combined.

\paragraph{Triangulations with defects.}
We first consider "triangulations with defects", i.e. where a small proportion of the faces are not triangles. More precisely, for $\mathbf{f}=(f_j)_{j \geq 1}$ such that $\sum_{j \geq 1} j f_j$ is an even integer, we denote by $M_{\mathbf{f}}$ a uniform rooted planar map among all those with exactly $f_j$ faces of degree $j$ for all $j \geq 1$. We write $|\mathbf{f}|=\frac{1}{2} \sum_{j \geq 1} j f_j$ for the number of edges of such a map.

\begin{corr}\label{cor_triang_with_defects}
Let $\mathbf{f}^n$ for $n \geq 1$ be face degree sequences such that $|\mathbf{f}^n| \to +\infty$ when $n \to +\infty$. We assume that $\sum_{j \ne 3} j f_j = o \left( |\mathbf{f}^n| \right)$. Then the random maps $M_{\mathbf{f}^n}$ converge locally to the UIPT as $n \to +\infty$.
\end{corr}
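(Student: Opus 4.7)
The plan is to combine Theorem~\ref{main_thm} with three ingredients: tightness of $(M_{\mathbf{f}^n})$ in the local topology, the verification that any subsequential limit $T$ is a Markovian infinite planar triangulation, and the identification of the random parameter $\Lambda$ provided by Theorem~\ref{main_thm} as $\lambda_c$ almost surely. The whole point is that, in the framework of this paper, none of these three ingredients requires a sharp combinatorial analysis of $M_{\mathbf{f}^n}$.

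\textbf{Tightness and nature of the limit.} Since our definition of infinite triangulation allows vertices of infinite degree, the space of equivalence classes of rooted infinite planar triangulations endowed with the local topology is compact, and $(M_{\mathbf{f}^n})$ is automatically tight. Pick a subsequential limit $T$. It is infinite because $|\mathbf{f}^n|\to\infty$, and it is a.s.\ a triangulation because the assumption $\sum_{j\ne 3} jf_j^n = o(|\mathbf{f}^n|)$ implies that the expected number of non-triangular face-corners inside any fixed ball around the root of $M_{\mathbf{f}^n}$ tends to $0$.

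\textbf{Markov property.} For any finite triangulation with holes $t$ having $|t|$ triangular internal faces and $m$ holes of perimeters $p_1,\ldots,p_m$, uniform sampling gives
\[ \P\bigl(t \subset M_{\mathbf{f}^n}\bigr) = \frac{N\bigl(\mathbf{f}^n - |t|\,\delta_3;\, p_1,\ldots,p_m\bigr)}{N(\mathbf{f}^n)}, \]
where $N(\mathbf{g};\, p_1,\ldots,p_m)$ counts rooted planar maps with face sequence $\mathbf{g}$ and $m$ additional distinguished boundary-faces of perimeters $p_1,\ldots,p_m$. The right-hand side depends on $t$ only through $(|t|,p_1,\ldots,p_m)$, and this dependence is preserved by local limits, so $T$ is Markovian in the sense of the paper.

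\textbf{Identification of $\Lambda$.} By Theorem~\ref{main_thm} we then have $T = \TT_\Lambda$ with $\Lambda \in [0,\lambda_c]\cup\{\star\}$. Euler's formula applied to $M_{\mathbf{f}^n}$, combined with $\sum_{j\ne 3} j f_j^n = o(|\mathbf{f}^n|)$, yields $V^n = \tfrac{1}{3}|\mathbf{f}^n|(1+o(1))$, so the asymptotic average vertex degree is $6$. For the degenerate values $\Lambda\in\{0,\star\}$ every vertex of $\TT_\Lambda$ has infinite degree; for the hyperbolic range $\Lambda\in(0,\lambda_c)$ the peeling exploration of $\TT_\Lambda$ creates vertices at a strictly super-critical rate depending on $\Lambda$; only $\Lambda=\lambda_c$ is consistent with the Euler-imposed rate in the finite model. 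I would make this rigorous by running the peeling exploration on $M_{\mathbf{f}^n}$ and on $T$ in parallel: in both models, the perimeter-volume-vertex triple is a Markov chain (by Markovianity), and one can compare asymptotic first moments at the level of a single peeling step.

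\textbf{Main obstacle.} I expect the delicate point to be the quantitative comparison between the finite model and the limit. Because the root is chosen as a uniform oriented edge, the marginal law of $\deg(\rho)$ is size-biased and may well blow up along the sequence, so one cannot simply write ``$\E[\deg(\rho_T)]\leq 6$''. The cleanest workaround is to compare the finite and limit models on \emph{bounded} observables of a single peeling step (such as the indicator that the peeled triangle swallows no vertex), for which bounded convergence applies without any uniform integrability concern. Apart from this point, the proof is a direct transcription of the scheme of~\cite{BL19}, now considerably streamlined by the fact that Theorem~\ref{main_thm} absorbs the checks of planarity and one-endedness that previously required a heavy combinatorial input.
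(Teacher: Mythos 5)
Your overall outline matches the paper: tightness from compactness of $(\overline{\T}, d_{\loc}^*)$, a subsequential limit that is an infinite planar triangulation, Markovianity from invariance of $\P(t\subset M_{\mathbf{f}^n})$ under replacements that preserve volume and hole perimeters, then Theorem~\ref{main_thm}, and finally identification of the parameter. The first three pieces are in order. But there is a genuine gap in the last one, and you see it coming yourself: you correctly observe that $\deg(\rho)$ is size-biased and unbounded, so you cannot simply pass $\E[\deg(\rho)]$ to the limit, and you propose to replace it by peeling-step observables. That replacement is not carried out, and as stated it does not resolve the difficulty: to identify $\Lambda$ from a peeling-step observable you need to compute (the limit of) that observable's law in $M_{\mathbf{f}^n}$, which reintroduces exactly the kind of combinatorial control the whole argument is designed to avoid, and which is not supplied by rerooting invariance alone.

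The paper's fix is a single, cheap observable that circumvents this: the \emph{inverse} degree of the root vertex, $1/\deg(\rho)$. It is bounded by $1$, and it is continuous for $d_{\loc}^*$ even in the presence of infinite-degree vertices (if $\deg(\rho)=\infty$ in the limit, then for any $d$ a large enough dual ball already shows $\geq d$ distinct faces at $\rho$, forcing $\deg(\rho_n)\to\infty$). So $\E[1/\deg(\rho_{M_{\mathbf{f}^n}})]\to\E[1/\deg(\rho_T)]$ with no uniform-integrability issue. On the finite side, rerooting invariance plus Euler's formula give
\[
\E\!\left[\frac{1}{\deg_{M_{\mathbf{f}^n}}(\rho_n)}\right]=\frac{V_n}{2E_n}=\frac{2+\sum_{j\geq 1}(j-2)f_j^n}{2\,|\mathbf{f}^n|}\longrightarrow \frac{1}{6},
\]
and on the limit side \cite[Proposition 20]{BL19} gives $\E[1/\deg_{\TT_\lambda}(\rho)]\le 1/6$ with equality iff $\lambda=\lambda_c$, while this quantity is $0$ for $\lambda\in\{0,\star\}$. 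This forces $\Lambda=\lambda_c$ a.s.\ directly. You should also add the last step: convergence is first obtained for $d_{\loc}^*$, and Lemma~\ref{lem_dual_convergence} upgrades it to $d_{\loc}$ because the UIPT has finite vertex degrees. So the structure of your argument is right, but you are missing the inverse-degree observable, which is the key device that makes the identification step rigorous without any additional combinatorics.
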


We note that the combinatorics of maps with prescribed face degrees are only fully understood when we consider bipartite maps~\cite{MM07}, which is not the case here. On the other hand, the case where the non-triangular faces are simple and face-disjoint was completely treated by Krikun~\cite{Kri07}. This Corollary was one of the motivations for this work, and will be used in a forthcoming work of Curien, Kortchemski and Marzouk on mesoscopic limits of uniform planar maps with fixed numbers of edges, faces and vertices~\cite{CKM21}.

\paragraph{Triangulations with moderate genus.}
For $n \geq 1$ and $0 \leq g \leq \frac{n}{2}$, we denote by $T_{n,g}$ a uniform rooted triangulation with $2n$ faces and genus $g$. Note that the condition $g \leq \frac{n}{2}$ is necessary for such a triangulation to exist. Using Theorem~\ref{main_thm}, we can recover the following particular case of the main result of~\cite{BL19}.

\begin{corr}\label{cor_moderate_genus}
Let $(g_n)$ be a sequence such that $\frac{g_n}{n} \to 0$ as $n \to +\infty$. Then $T_{n,g_n}$ converges locally to the UIPT as $n \to +\infty$.
\end{corr}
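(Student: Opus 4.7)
\begin{idproof}
The plan is to combine Theorem~\ref{main_thm} with only mild combinatorial input on the finite models $T_{n,g_n}$. Since our extended framework allows infinite vertex degrees, the sequence $(T_{n,g_n})_{n\geq 1}$ is automatically tight in the local topology, so it is enough to characterize its subsequential limits.

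Let $T$ be such a subsequential limit. I first check that $T$ is Markovian. Fix a finite triangulation $t$ with holes of perimeters $p_1,\dots,p_k$ and $|t|$ faces, and write $\mathcal{T}^{(p)}_{m,h}$ for the set of triangulations with a boundary of perimeter $p$, $m$ inner faces and genus $h$. Direct enumeration gives
\[
\P\bigl(t\subset T_{n,g_n}\bigr) \;=\; \frac{1}{\bigl|\mathcal{T}_{n,g_n}\bigr|}\; \sum \; \prod_{i=1}^{k} \bigl|\mathcal{T}^{(p_i)}_{m_i,h_i}\bigr|,
\]
where the sum is over admissible $(m_i,h_i)$ with $\sum m_i = 2n-|t|$ and $\sum h_i = g_n$. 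This expression depends on $t$ only through the multiset $(p_1,\dots,p_k)$ and the total face count $|t|$, and the same therefore holds of $\P(t\subset T)$, which is the Markov property.

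By Theorem~\ref{main_thm}, $T = \TT_\Lambda$ for some random $\Lambda\in[0,\lambda_c]\cup\{\star\}$. To conclude that $\Lambda = \lambda_c$ almost surely, I would use bounded local observables. First, Euler's formula applied to $T_{n,g_n}$ gives $|V|=n+2-2g_n$ and $|E|=3n$, and rooting at a uniform oriented edge yields
\[
\E\!\left[\tfrac{1}{\deg(\rho)}\right] \;=\; \frac{|V|}{2|E|} \;=\; \frac{n+2-2g_n}{6n} \;\xrightarrow[n\to\infty]{}\; \tfrac{1}{6}
\]
under $g_n/n\to 0$. Since $1/\deg(\rho)\in[0,1]$ is bounded, this passes to the limit and already rules out the degenerate atoms $\{0,\star\}$, whose root vertex has infinite degree. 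To separate $\lambda_c$ from the sub-critical values $\lambda<\lambda_c$, I would compute the finite ratio $\bigl|\mathcal{T}_{n-1,g_n}\bigr|/\bigl|\mathcal{T}_{n,g_n}\bigr|$, which is, up to explicit combinatorial factors, the probability of a specific "peeling step" local event, and which converges to $\lambda_c$ when $g_n/n\to 0$. Comparing with the corresponding quantity for $\TT_\Lambda$ (which equals $\Lambda$ by the Markov property) forces $\Lambda = \lambda_c$ a.s.

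The main obstacle is precisely this last identification of $\Lambda$: although tightness and Markovianity come almost for free thanks to the extended framework and to Theorem~\ref{main_thm}, separating $\lambda_c$ from the sub-critical parameters still requires a small combinatorial input, namely the asymptotic rate $\bigl|\mathcal{T}_{n,g_n}\bigr|^{1/n}\to\lambda_c^{-1}$ for sub-linear genus sequences. This is however much weaker than the Goulden--Jackson recursion used in~\cite{BL19}, since planarity and one-endedness of the limit are now built into Theorem~\ref{main_thm}.
\end{idproof}
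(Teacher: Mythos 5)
Your proposal has a genuine gap: you never verify that a subsequential limit $T$ of $T_{n,g_n}$ is \emph{planar}. This is precisely the only nontrivial step the paper has to handle in this corollary: Theorem~\ref{main_thm} only applies to planar triangulations, and since $g_n$ can go to infinity (just with $g_n=o(n)$), a subsequential limit could a priori contain small handles. The paper's proof addresses this by a local counting argument: it lets $NP_r(t_n)$ be the set of edges $e$ whose dual ball $B_r^*(t_n;e)$ is nonplanar, extracts from it an edge-disjoint subcollection of size at least $3^{-(2r+1)}\#NP_r(t_n)$ (using that a dual ball has boundedly many edges), observes that each such ball contributes at least one unit of genus, and concludes $\#NP_r(t_n)\le 3^{2r+1}g_n=o(n)$. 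Rerooting uniformly then gives planarity of each $B_r^*$ with probability $1-o(1)$, hence planarity of the limit. Without some argument of this type, the invocation of Theorem~\ref{main_thm} is unjustified.

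Your second issue is the identification of $\Lambda$. You claim the mean inverse degree only rules out the degenerate atoms $\{0,\star\}$ and that separating $\lambda_c$ from subcritical $\lambda$ requires the asymptotic rate $|\mathcal{T}_{n,g_n}|^{1/n}\to\lambda_c^{-1}$. This is both unnecessary and against the spirit of the result: such a rate is precisely the kind of high-genus combinatorial input the paper is designed to avoid. The key fact you are missing is~\cite[Proposition~20]{BL19}: for $\lambda\in(0,\lambda_c]$ one has $d(\lambda):=\E[1/\deg_{\TT_\lambda}(\rho)]\le\frac{1}{6}$, \emph{with equality if and only if} $\lambda=\lambda_c$. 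Combined with $d(0)=d(\star)=0$, the single computation $\E[1/\deg(\rho)]=\frac{n+2-2g_n}{6n}\to\frac{1}{6}$ already forces $\Lambda=\lambda_c$ almost surely (write $\E[d(\Lambda)]=\frac{1}{6}$ and use that $d(\Lambda)\le\frac{1}{6}$ a.s.). So your "small combinatorial obstacle" at the end is not an obstacle at all; the genuine missing piece is planarity.
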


Compared to~\cite{BL19}, the proof is now much shorter, and does not rely on any combinatorics of high genus maps.

\paragraph{Very high temperature Ising model.}
Our last application deals with the Ising model on random triangulations. If $t$ is a triangulation of the sphere and $\sigma$ is a colouring of the faces of $t$ in black and white, we denote by $H(t,\sigma)$ the number of edges $e$ of $t$ such that both sides of $e$ have the same colour. For $\beta \in \R$ and $n \geq 1$, we denote by $T_n[\beta]$ the random face-coloured triangulation of the sphere with $2n$ faces such that for all $(t, \sigma)$ the probability that $T_n[\beta]=(t,\sigma)$ is proportional to $e^{\beta H(t,\sigma)}$.

\begin{corr}\label{cor_Ising}
Let $(\beta_n)$ be a sequence of real numbers such that $\beta_n \to 0$ as $n \to +\infty$. Then $T_n[\beta_n]$ converges locally as $n \to +\infty$ to the UIPT equipped with Bernoulli face percolation with parameter $\frac{1}{2}$.
\end{corr}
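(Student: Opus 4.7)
The plan is to apply Theorem~\ref{main_thm}, using its corollary that the UIPT is the unique Markovian infinite planar triangulation of average vertex degree $6$. Thanks to the enlarged class of triangulations allowing infinite vertex degrees, the sequence $T_n[\beta_n]$ is automatically tight for the local topology, so I can pass to a subsequential limit $(T_\infty, \sigma_\infty)$, and the task reduces to identifying $T_\infty$ as the UIPT and $\sigma_\infty$ as i.i.d.\ Bernoulli$(1/2)$.

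\textbf{Markov property of $T_\infty$.} The marginal law of the triangulation under $T_n[\beta_n]$ is proportional to $Z_{\beta_n}(T) = \sum_\sigma e^{\beta_n H(T,\sigma)}$. Via the high-temperature expansion, $Z_\beta(T)$ factors as a prefactor depending only on $\beta$, $F(T)$ and $|E(T)|$ (both deterministic in our setting, equal to $2n$ and $3n$) times a polynomial $\Xi(T, \tanh(\beta/2))$ in the variable $\tanh(\beta/2) \to 0$, summed over even edge-subsets of the dual of $T$. The crux is to show that the correlation between any fixed local event $\{t \subset T_n\}$ and $\Xi(T_n, \tanh(\beta_n/2))$ vanishes in the limit: informally, the even subgraphs contributing to $\Xi$ with non-negligible weight sit asymptotically far from the root. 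Combined with the exact Markov property of uniform triangulations of the sphere, this yields that $T_\infty$ is Markovian in the sense of the paper.

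\textbf{Identification of the limit.} For any (Ising-weighted) random triangulation of the sphere with $2n$ faces, uniformly oriented-edge-rooted, $\E[1/\deg(\text{root vertex})] = |V|/(2|E|) = (n+2)/(6n)$, because the vertex and edge counts are deterministic. Bounded convergence ($1/\deg \le 1$) gives $\E[1/\deg(\text{root of }T_\infty)] = 1/6$, so $T_\infty$ has average vertex degree $6$ in the unimodular sense. Theorem~\ref{main_thm} then writes $T_\infty = \TT_\Lambda$ for a random $\Lambda \in [0,\lambda_c] \cup \{\star\}$; the average degree of $\TT_\lambda$ is infinite at $\lambda \in \{0, \star\}$ and strictly greater than $6$ for $\lambda \in (0, \lambda_c)$, with equality only at $\lambda_c$, forcing $\Lambda = \lambda_c$ a.s., i.e.\ $T_\infty$ is the UIPT. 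Conditionally on the triangulation, the face colouring of $T_n[\beta_n]$ is an Ising configuration at inverse temperature $\beta_n \to 0$; its restriction to any fixed finite face set converges to i.i.d.\ Bernoulli$(1/2)$ uniformly in the boundary conditions, and this joins with the UIPT convergence to give the full claim.

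\textbf{Main obstacle.} The main difficulty is the Markov step: controlling the influence of the global Ising weighting on local statistics of the underlying triangulation. Because the number of even subgraphs contributing to $\Xi(T_n, u_n)$ can grow rapidly in $n$, one cannot hope for $\Xi(T_n, u_n)$ to concentrate around $1$; instead, one must show that whatever random factor $\Xi$ introduces is asymptotically independent of local events. I expect this to require a localisation-versus-bulk decomposition of the sum defining $\Xi$, or a direct peeling-type argument adapted to the Ising-Boltzmann random triangulation.
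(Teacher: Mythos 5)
The overall architecture of your argument --- tightness for $d_{\loc}^*$, identification of the triangulation via the Markov property and the mean inverse degree identity, and separate identification of the colouring --- matches the paper's. The mean-degree step is essentially correct (one small point: you invoke "bounded convergence" with $1/\deg \leq 1$, but what is actually needed is that $\rho \mapsto 1/\deg(\rho)$ is a \emph{bounded and $d^*_{\loc}$-continuous} functional, which is what allows passing to the subsequential limit; boundedness alone does not suffice for a limit in distribution).

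The Markov step, however, has a genuine gap, which you yourself flag as the "main obstacle." Your plan is to rewrite the marginal weight $Z_{\beta_n}(T) = \sum_\sigma e^{\beta_n H(T,\sigma)}$ via the high-temperature expansion as a sum $\Xi(T, \tanh(\beta_n/2))$ over even subgraphs of the dual, and then to show that $\Xi$ decorrelates from every local event $\{t \subset T_n\}$. This is not proved, and it is not a small step: one would have to show that even subgraphs passing through the neighbourhood of the root contribute negligibly relative to the bulk, for a \emph{random} triangulation whose law is itself reweighted by $\Xi$, which is circular in flavour. The paper sidesteps this issue entirely by working with the joint law of $(T_n, \sigma)$ rather than the marginal of $T_n$. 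The key observation is elementary: changing the colour of a single face changes $H$ by at most $3$ (a face has three edges), hence changes the probability of any given configuration by at most a factor $e^{3|\beta_n|} \to 1$. Consequently, for any fixed coloured sub-triangulation $(t,\sigma)$, the probabilities $\P((t,\sigma)\subset T_n[\beta_n])$ are uniformly comparable over all $\sigma$, and in the limit are equal. This yields directly that, given the triangulation $T$, the face colours are i.i.d.\ Bernoulli$(\tfrac12)$. Combined with the fact that for finite $n$ the probability $\P((t,\sigma)\subset T_n[\beta_n])$ depends on $t$ only through its perimeters and inner volume, the hole-adjacent face colours, and an internal Ising factor $e^{\beta_n H_{\mathrm{int}}(t,\sigma)}\to 1$, one gets that the limit law of $T$ is Markovian. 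No control over the Ising partition function or its cluster expansion is required. I recommend you replace your $\Xi$-decorrelation plan with this single-face-flip bound; otherwise the Markov step is not established.
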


The exact same argument would also work for the Potts model or similar statistical physics models. We note that the case where the Ising model lives on the vertices and the inverse temperature $\beta>0$ is fixed was treated in~\cite{AMS20}. We also refer to~\cite{CT20, CT20b, CT20c} for local convergence results on Ising triangulations with a boundary, once again for $\beta$ fixed.

\paragraph{Large deviations for pattern occurences in uniform triangulations.}
We conclude the consequences of Theorem~\ref{main_thm} with a last application to the counting of patterns in uniform triangulations. Let $t_0$ be a fixed rooted, finite triangulation with one or several holes. For every triangulation $t$ of the sphere, we denote by $\occ_{t_0}(t)$ the number of occurences of $t_0$ in $t$. More precisely $\occ_{t_0}(t)$ is the number of oriented edges $e$ of $t$ such that there is a neighbourhood of $e$ isomorphic to $t_0$, so that $e$ is matched with the root of $t_0$. We recall that $T_n$ stands for a uniform triangulation of the sphere with $2n$ faces, and $\TT_{\lambda_c}$ for the UIPT.

\begin{thm}\label{thm_large_deviations}
	For every finite triangulation with holes $t_0$ and every $\eps>0$, the probability
	\[ \P \left( \left| \frac{\occ_{t_0}(T_n)}{6n} - \P \left( t_0 \subset \TT_{\lambda_c} \right) \right|> \eps  \right)\]
	decays exponentially in $n$.
\end{thm}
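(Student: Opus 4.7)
\begin{preuve}[Plan]
The strategy is a contradiction argument built on Theorem~\ref{main_thm}. Assuming the deviation probability fails to decay exponentially along some subsequence, we condition $T_n$ on the deviation event, extract a local limit, show it is a Markovian triangulation with average vertex degree $6$, conclude it is the UIPT, and finally derive a contradiction from the UIPT's pattern density being exactly $p_0 := \P(t_0 \subset \TT_{\lambda_c})$.

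Concretely, suppose for contradiction that there exist $\eps>0$ and a subsequence $(n_k)$ with $\alpha_k := \P(\occ_{t_0}(T_{n_k})/(6n_k) > p_0+\eps) \geq e^{-o(n_k)}$ (the other side of the deviation being symmetric); denote by $\tilde T_k$ the triangulation $T_{n_k}$ conditioned on this event. The heart of the plan is to prove that any subsequential local limit $\tilde T_\infty$ of $(\tilde T_k)$ is Markovian. Given a finite rooted pattern $t$ with perimeter vector $\vec p$ and inner volume $v$, one writes
\[ \tilde\P_k(t \subset \tilde T_k) \;=\; \P_{n_k}(t \subset T_{n_k})\,\frac{\beta_k(t)}{\alpha_k}, \qquad \beta_k(t) := \P_{n_k}\!\left(\text{deviation event} \mid t \subset T_{n_k}\right). \]
Conditionally on $\{t \subset T_{n_k}\}$, the complement $T_{n_k}\setminus t$ is uniform over triangulations of the complementary shape, and $\occ_{t_0}(T_{n_k}) = \occ_{t_0}(T_{n_k}\setminus t) + O(|\partial t|)$. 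Hence $\beta_k(t)$ depends on $t$ only through $(\vec p,v)$ up to a shift of the deviation threshold by a bounded amount. Because $\alpha_k$ is only subexponentially small, a ``continuity of the rate function'' estimate shows that shifting the threshold by a constant multiplies the probability by $1+o(1)$, so $\beta_k(t)/\alpha_k$ converges to a function of $(\vec p,v)$ alone, giving the Markov property in the limit. Tightness of $(\tilde T_k)$ follows from the uniform (in $n$, exponentially decaying in $N$) tail estimates on the size of the ball of radius $r$ under $T_n$, which survive biasing by the subexponential factor $1/\alpha_k$ once $N$ is allowed to grow slowly with $k$.

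Once Markovianness is established, Theorem~\ref{main_thm} gives $\tilde T_\infty = \TT_\Lambda$ for some random $\Lambda \in [0,\lambda_c]\cup\{\star\}$. A uniform-integrability argument on $\deg(\rho_{n_k})$ (which is tight under $T_{n_k}$ and disturbed only by a subexponential factor) rules out $\Lambda \in \{0,\star\}$. The exact identities $|V(T_{n_k})| = n_k+2$ and $|E(T_{n_k})| = 3n_k$ are unchanged under conditioning, and in the local limit they translate into the statement that $\tilde T_\infty$ has average vertex degree $6$. The corollary of Theorem~\ref{main_thm} highlighted in the introduction then forces $\Lambda = \lambda_c$, so $\tilde T_\infty$ is the UIPT. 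Since the root of $T_{n_k}$ is a uniform oriented edge,
\[ \tilde\E\!\left[\frac{\occ_{t_0}(T_{n_k})}{6n_k}\right] \;=\; \tilde\P_k\!\left(\text{root neighborhood of }T_{n_k}\cong t_0\right) \;\xrightarrow[k\to\infty]{}\; \P(t_0 \subset \TT_{\lambda_c}) \;=\; p_0, \]
whereas by construction this expectation exceeds $p_0+\eps$ for every $k$, a contradiction.

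The \emph{main obstacle} is the Markov-property step: showing that conditioning on a very restrictive global event nevertheless yields a Markovian limit. The mechanism is that the deviation event depends on a macroscopic sum insensitive to $O(|\partial t|)$ boundary modifications, and the subexponential decay of its probability makes such constant-order shifts irrelevant after normalizing by $\alpha_k$. Handling tightness under the same biasing, and excluding the degenerate limits $\TT_0$ and $\TT_\star$, are the remaining delicate (but more routine) points.
\end{preuve}
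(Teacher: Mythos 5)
Your approach is genuinely different from the paper's: you condition $T_n$ directly on the large-deviation event and try to show the conditioned measure has a Markovian local limit, whereas the paper exponentially tilts the uniform measure by $e^{\beta_n\,\occ_{t_0}}$ with $\beta_n\to 0$, proves convergence of the tilted model to the UIPT, and then extracts the exponential bound via a Markov/Chernoff comparison between tilted and untilted measures. Both approaches rest on the same observation that swapping a sub-triangulation $t_1$ for a $t_2$ with the same perimeters and inner volume changes $\occ_{t_0}$ by at most a constant $c=c(t_0,t_1,t_2)$, but they exploit it differently, and only one of the two exploitations is actually sound.

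The sound one is the paper's, because the tilting weight is a smooth function of $\occ_{t_0}$: a change of $\occ_{t_0}$ by $c$ multiplies $e^{\beta_n\occ_{t_0}}$ by a factor in $[e^{-c\beta_n},e^{c\beta_n}]\to 1$, which immediately yields Markovianness of any local limit with no further input. Your version needs the analogous statement for a sharp-threshold indicator, i.e.\ you must show that
\[ \frac{\P\!\left(\{t_1\subset T_{n_k}\}\cap\{\occ_{t_0}>s\}\right)}{\P\!\left(\{t_2\subset T_{n_k}\}\cap\{\occ_{t_0}>s\}\right)} \longrightarrow 1, \]
and after the bijective swap this reduces to proving that replacing the threshold $s$ by $s\pm c$ only changes the probability by a factor $1+o(1)$. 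You invoke a ``continuity of the rate function'' estimate for this, but no such estimate is available: we have no large-deviation principle, no convexity, and indeed proving that the tail $\P(\occ_{t_0}(T_n)>s)$ is insensitive to $O(1)$-shifts of $s$ is essentially as hard as the theorem you are trying to prove. Nothing rules out that the tail probability drops by a non-negligible factor when $s$ crosses certain values — the random variable is integer-valued and we control only its mean. This is the step that the exponential tilting is designed to replace, and it is a genuine gap, not a routine detail. The remaining ingredients of your plan (tightness — automatic for $d^*_{\loc}$ since dual balls have boundedly many faces; exclusion of $\TT_0,\TT_\star$ and identification $\Lambda=\lambda_c$ via the mean inverse degree; the final contradiction with the conditional expectation of $\occ_{t_0}/6n$) all match or shadow the paper's toolbox and would work if the Markov-property step could be completed, but it cannot in the form you propose.
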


Note that the local convergence of $T_n$ to $\TT_{\lambda_c}$ only gives the convergence of the expectation of $\frac{\occ_{t_0}(T_n)}{6n}$ to $\P \left( t_0 \subset \TT_{\lambda_c} \right)$. Convergence in probability was proved in~\cite{BD18} in the more general context of planar maps with Boltzmann weights on the face degrees. The proof of Theorem~\ref{thm_large_deviations} consists of showing using Theorem~\ref{main_thm} that if $\beta_n \to 0$, then $T_n$ biased by $\exp (\beta_n \, \occ_{t_0}(T_n))$ still converges to the UIPT. This means that perturbating the uniform measure by subexponential factors does not affect $\occ_{t_0}(T_n)$ in a significant way, so triangulations where $\occ_{t_0}(T_n)$ deviates from its mean must be exponentially rare.

\paragraph{Sketch of proof of the main theorem.}
The starting point of the proof of Theorem~\ref{main_thm} is roughly the same as the proof of~\cite[Theorem 2]{BL19}: using linear equations between the probabilities $\P \left( t \subset T \right)$ and the Hausdorff moment problem, we express the probabilities $\P \left( t \subset T \right)$ as the moments of a pair $(\Lambda, \Gamma)$ of random variables. Here $\Lambda$ and $\Gamma$ can roughly be interpretated as Boltzmann weights on respectively the number of vertices and the number of ends. The PSHT $\TT_{\lambda}$ for $\lambda \in [0,\lambda_c]$ corresponds to the case $(\Lambda, \Gamma)=(\lambda,0)$, whereas the degenerate triangulation $\TT_{\star}$ corresponds to $(\Lambda, \Gamma)=(0,1)$. We then use the explicit generating function of triangulations with a boundary to prove that one of the probabilities $\P \left( t \subset T \right)$ is negative, unless almost surely $\Gamma=0$ or $(\Lambda, \Gamma)=(0,1)$. Finally, we refer to Section~\ref{sec_conj} for a discussion on extensions of Theorem~\ref{main_thm} and a conjecture in the nonplanar case.

\paragraph{Acknowledgements.}
The author thanks Linxiao Chen, Nicolas Curien, Igor Kortchemski and Cyril Marzouk for useful discussions. We are grateful to the Laboratoire de Mathématiques d'Orsay, where this work was started, for its hospitality.

\tableofcontents

\section{Preliminaries}\label{sec_prelim}

\subsection{Basic definitions}\label{subsec_defns}

\paragraph{Finite triangulations.}
We recall that a \emph{finite planar map} is a gluing of a finite number of finite polygons which is homeomorphic to the sphere. We will always consider \emph{rooted} maps, which means that they carry a distinguished oriented edge called the \emph{root}. We will call \emph{root vertex} the starting point of the root edge, and \emph{root face} the face lying on the left of the root edge. A \emph{triangulation of the sphere} is a finite planar map where all the faces have degree $3$. We denote by $\T_f$ the set of triangulations of the sphere. Note that in all this work, we will consider \emph{type-I triangulations}, which means that we allow self-loops and multiple edges.

A \emph{triangulation with holes} is a finite, planar map $t$ with marked faces called the \emph{holes}, such that:
\begin{itemize}
	\item all the internal faces (i.e. the faces which are not holes) are triangles;
	\item every edge is incident to at least one internal face, and the set of internal faces forms a connected subset of the dual of $t$.
\end{itemize}
It will also be important for us to have a notion of volume for triangulations with holes. While the most usual convention is to use the total number of vertices, we will choose a different one here, which allows the volume to take any nonnegative value\footnote{The point of this change is to have a definition of the PSHT which still makes sense for $\lambda=0$, see Section~\ref{subsec_combinatorics}.}.

\begin{lem}
Let $t$ be a triangulation with $k$ holes of perimeters $p_1, \dots, p_k$, and let $\widetilde{v}$ be its total number of vertices. Then we have
\begin{equation}\label{eqn_defn_inner_volume}
v := \widetilde{v}-1-\sum_{i=1}^k (p_i-1) \geq 0.
\end{equation}
We call this quantity $v$ the \emph{inner volume} of $t$.
\end{lem}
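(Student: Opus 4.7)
The plan is to combine Euler's formula with a double count of edge–face incidences, and then to extract the remaining inequality from the connectedness hypothesis on the internal faces. I begin by letting $T$ denote the number of triangular (internal) faces, so that $t$ has $T+k$ faces in total, and by partitioning its $E$ edges into the set of $E_i$ \emph{internal edges} (flanked by two triangles) and the set of $E_b$ \emph{boundary edges} (flanked by one triangle and one hole). The hypothesis that every edge is incident to at least one internal face is precisely what excludes the third possibility, that both face-sides of an edge are holes, so this partition is exhaustive.

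Summing edge-side incidences around the internal faces and around the $k$ holes yields
\[ 3T = 2E_i + E_b \qquad \text{and} \qquad \sum_{i=1}^{k} p_i = E_b, \]
so $E = E_i + E_b = (3T + \sum_{i} p_i)/2$. Plugging this into Euler's formula $\widetilde{v} - E + (T+k) = 2$ and simplifying, I expect to obtain
\[ v \;=\; \widetilde{v} - 1 - \sum_{i=1}^{k} (p_i - 1) \;=\; 1 + \frac{T - \sum_{i=1}^{k} p_i}{2}. \]
The claim $v \geq 0$ then reduces to the single inequality $\sum_{i=1}^{k} p_i \leq T + 2$.

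For this last bound I will use the second half of the definition of a triangulation with holes, namely that the set of internal faces forms a connected subset of the dual of $t$. The dual subgraph spanned by the internal faces has $T$ vertices, and its edges are in one-to-one correspondence with the internal edges of $t$, so this dual graph has exactly $E_i$ edges. Since a connected graph on $T$ vertices has at least $T-1$ edges, $E_i \geq T-1$. Feeding this into $3T = 2E_i + E_b$ gives $E_b \leq T + 2$, and the equality $E_b = \sum_i p_i$ finishes the proof.

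The only step with any real content is the tree bound $E_i \geq T - 1$ coming from dual-connectedness; the rest is Euler's formula plus a standard double count, and no finiteness of vertex degrees is needed beyond $t$ itself being a finite map.
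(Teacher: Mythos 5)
Your proof is correct, and it takes a genuinely different route from the paper's. The paper builds an auxiliary graph $\widehat{t}$ on the set of holes together with the set $V_{\partial}$ of boundary vertices, joining each hole to the vertices on its boundary; it then argues that a cycle in $\widehat{t}$ would separate the internal faces in the dual, so $\widehat{t}$ is a forest, and the forest edge bound gives $\sum_i p_i \leq k + \#V_{\partial} - 1 \leq k + \widetilde{v} - 1$, which is exactly the claimed inequality. You instead combine Euler's formula with the two edge-side double counts $3T = 2E_i + E_b$ and $\sum_i p_i = E_b$, and feed in the connectivity hypothesis through the dual subgraph on internal faces to get $E_i \geq T - 1$ and hence $\sum_i p_i = E_b \leq T + 2$. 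Both arguments hinge on the same two structural hypotheses (no hole-hole edges, internal faces connected in the dual) and both implicitly use planarity (you through Euler's formula, the paper through the Jordan-curve separation argument). One thing your route buys that the paper's does not is the exact identity $v = 1 + \frac{1}{2}(T - \sum_i p_i)$, which pins the inner volume down in terms of the number of triangles and the hole perimeters rather than merely bounding it; the paper's route is arguably lighter-weight since it avoids Euler's formula and the two incidence counts. Note that your identity quietly uses $T \geq 1$ (so that the map is a genuine triangulation with holes rather than one of the three degenerate conventions $t_0^1, t_0^2, t_0^{1,1}$ the paper adds afterwards), but so does the argument as a whole, since $T=0$ together with the edge-incidence hypothesis would force the map to be edgeless.
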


\begin{proof}
We denote by $V_{\partial}$ the set of vertices of $t$ which lie on at least one of the holes. Let $\widehat{t}$ be the map whose vertices are the holes of $t$ and the vertices of $V_{\partial}$, and where for each hole $h$ of $t$ and each vertex $v$ on $\partial h$, there is one edge linking $h$ to $v$. Then a cycle in $\widehat{t}$ would disconnect the set of internal faces, so $\widehat{t}$ is a forest. On the other hand $\widehat{t}$ has $k+\# V_{\partial}$ vertices and $\sum_{i=1}^k p_i$ edges, so
\[ k+ \# V_{\partial} \geq 1+\sum_{i=1}^k p_i, \]
and finally $\widetilde{v} \geq \# V_{\partial} \geq 1+\sum_{i=1}^k (p_i-1)$.
\end{proof}

On the other hand, let $p_1, \dots, p_k \geq 1$ and $v \geq 0$. We note that there is a triangulation with $k$ holes of perimeters $p_1, \dots, p_k$ and inner volume $v$ except if $v=0$ and either $k=1$ and $p_1 \in \{1,2\}$, or $k=2$ and $p_1=p_2=1$. To bridge this gap, we set the following conventions:
\begin{itemize}
	\item the only triangulation with one hole of perimeter $1$ and inner volume $0$ is the triangulation $t^1_0$ consisting of a single vertex and no edge;
	\item the only triangulation with one hole of perimeter $2$ and inner volume $0$ is the triangulation $t^2_0$ consisting of two vertices linked by a single edge;
	\item the only triangulation with two holes of perimeter $1$ and inner volume $0$ is the triangulation $t^{1,1}_0$ consisting of a single vertex and a loop on this vertex.
\end{itemize}

For $p \geq 1$, a \emph{finite triangulation of the $p$-gon} is a finite, planar map where the root face has degree $p$ and is simple, and all the other faces have degree $3$. We denote by $\tau_n(p)$ the number of triangulations of the $p$-gon with volume $n$, i.e. with $n+1$ vertices overall. This convention for the volume is designed so that when we use a triangulation of a polygon to fill one of the holes of a triangulation with holes, their volumes add up.

\paragraph{Sub-triangulations and balls.}
If $T$ is a triangulation of the sphere and $t$ a triangulation with holes, we write $t \subset T$ if $T$ can be obtained from $t$ by filling each hole of $t$ with a triangulation of a polygon\footnote{By convention, we always have $t^1_0 \subset T$, we have $t^2_0 \subset T$ if and only if the root of $T$ is not a loop, and $t^{1,1}_0 \subset T$ if and only if the root of $T$ is a loop.}. We call $t$ a \emph{sub-triangulation} of $T$ write $t \subset T$. We now define two families of sub-triangulations of particular interest.

If $T$ is a triangulation of the sphere and $r \geq 0$, we denote by $B_r(T)$ the sub-triangulation of $T$ consisting of all the faces of $T$ incident to at least one vertex at graph distance at most $r-1$ from the root vertex, together with all the vertices and edges incident to these faces. We also denote by $B_r^*(T)$ the sub-triangulation consisting of all the faces at distance at most $r$ from the root face in the dual graph of $T$, together with all the vertices and edges incident to these faces. We call $B_r(T)$ (resp. $B_r^*(T)$) the \emph{ball of radius $r$} (resp. \emph{dual ball of radius $r$}) of $T$. In some cases, we will also use the notation $B_r(T;e)$ or $B_r^*(T;e)$ for the ball or dual ball around the root edge $e$ to emphasize the choice of the root edge.

\paragraph{Local distances and infinite triangulations.}
We now define two versions of the local topology on the set of triangulations. The first one is the one which is used most of the time in the literature, while the second is a weaker version. We will mostly use the second one in this work, but we will still obtain convergence results for the first one in the end (Corollaries~\ref{cor_triang_with_defects}, \ref{cor_moderate_genus} and \ref{cor_Ising}). If $T,T'$ are two triangulations of the sphere, we write
\[ d_{\loc}(T,T') = \left( 1+\min \{ r \geq 0 | B_r(T) \ne B_r(T') \}\right)^{-1},\]
\[ d_{\loc}^*(T,T') = \left( 1+\min \{ r \geq 0 | B_r^*(T) \ne B_r^*(T') \}\right)^{-1}.\]
We call $d_{\loc}$ (resp. $d^*_{\loc}$) the \emph{local distance} (resp. \emph{dual local distance}) on the set of triangulations. We denote by $\overline{\T}$ the completion of $\T_f$ for $d^*_{\loc}$, and write $\T_{\infty} = \overline{\T} \backslash \T_f$. An element of $\T_{\infty}$ will be called an \emph{infinite triangulation}. Alternatively, an infinite triangulation is a planar, connected gluing of countably many triangles along some of their vertices and edges, such that all the edges are glued two by two. Note however that vertex degrees may be infinite. It is also possible that the neighbourhood of a vertex $v$ becomes disconnected if $v$ is removed (see the right of Figure~\ref{fig_two_degenerate_examples}). However, note that the notion of a dual ball still makes sense in a triangulation $T \in \T_{\infty}$. As an example, Figure~\ref{fig_two_degenerate_examples} represents $B^*_3(\TT_0)$ on the left and $B_1^*(\TT_{\star})$ on the right. The notation $t \subset T$ also makes sense in this context (we say that $t \subset T$ if there is $r$ such that $t \subset B_r^*(T)$). Finally, we note that for any $r \geq 0$, a dual ball $B_r(T^*)$ has at most $1+3+\dots+3^r$ faces, so it may only take finitely many values. This implies that $\overline{\T}$ is compact for $d_{\loc}^*$. This will be important in the proofs of Corollaries~\ref{cor_triang_with_defects}, \ref{cor_moderate_genus} and \ref{cor_Ising}.

Finally, also with Corollaries~\ref{cor_triang_with_defects}, \ref{cor_moderate_genus} and \ref{cor_Ising} in sight, we state an easy lemma (see e.g.~\cite[Lemma 3]{BL19}) which will bridge the gap between convergence for $d^*_{\loc}$ and for $d_{\loc}$.

\begin{lem}\label{lem_dual_convergence}
	Let $(T_n)$ be a sequence of triangulations of $\overline{\T}$. Assume that
	\[ T_n \xrightarrow[n \to +\infty]{d_{\loc}^*} T,\]
	where $T \in \overline{\T}$ has only vertices with finite degrees. Then we also have $T_n \to T$ for $d_{\loc}$ when $n \to +\infty$.
\end{lem}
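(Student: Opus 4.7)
The plan is to show: for every $r \geq 0$, there exists $R \geq r$, depending only on $T$ and $r$, such that any triangulation $T' \in \overline{\T}$ with $B_R^*(T') = B_R^*(T)$ (as rooted planar maps) automatically satisfies $B_r(T') = B_r(T)$. The lemma then follows by choosing $n$ large enough that $B_R^*(T_n) = B_R^*(T)$.

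The first step is to use the finite-degree hypothesis to obtain finiteness. A breadth-first exploration of $T$ from the root shows that the set $V_r$ of vertices of $T$ at graph distance at most $r-1$ from the root is finite; since each such vertex has finite degree, so is the set $F'_r$ of faces of $T$ incident to at least one vertex at graph distance at most $r$ from the root. I would then choose $R$ large enough that $F'_r \subset B_R^*(T)$. The key consequence is that every $v \in V_r$ is then \emph{interior} to $B_R^*(T)$---that is, incident to no hole of the ball---and hence all faces and edges of $T$ incident to $v$ are already visible inside $B_R^*(T)$.

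Next, I would fix any $T' \in \overline{\T}$ with $B_R^*(T') = B_R^*(T)$. For the inclusion $B_r(T) \subset B_r(T')$, any geodesic from the root in $T$ to a vertex $v \in V_r$ uses only edges incident to faces in $B_R^*(T) = B_R^*(T')$, and hence defines a path of the same length in $T'$; moreover, by interiority the faces around $v$ in $T'$ coincide with those in $T$. For the reverse inclusion, I would argue by induction along a geodesic $v_0 = \mathrm{root}, v_1, \ldots, v_k$ in $T'$ with $k \leq r-1$: assuming $v_j$ has been identified with a vertex of $T$ at graph distance at most $j$, this vertex is interior to the ball (since $j \leq r-1$), so the edge $\{v_j, v_{j+1}\}$ in $T'$ must belong to the ball, and therefore to $T$; this gives $d_T(\mathrm{root}, v_{j+1}) \leq j+1$ and hence $v_{j+1} \in V_r$.

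The main conceptual point, and the only step that could fail without the finite-degree assumption, is that a priori $T'$ could attach additional edges or even whole new angular sectors at a vertex of $T$ on the outside of the dual ball. Finite vertex degree is exactly what rules this out: it lets us enlarge $R$ until every relevant vertex is completely surrounded by faces of $B_R^*(T)$, at which point its local structure becomes rigid under the identification $B_R^*(T') = B_R^*(T)$.
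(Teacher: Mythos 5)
Your proof is correct and is essentially the standard argument for this kind of statement (the paper itself does not give a proof here, but instead cites \cite[Lemma 3]{BL19}, whose proof is based on the same idea). The core mechanism is exactly right: finiteness of vertex degrees lets you choose $R$ so that the entire "star" of every vertex relevant to $B_r(T)$ is contained in $B_R^*(T)$, making those vertices interior, and interiority is precisely what makes the local structure rigid under the identification $B_R^*(T')=B_R^*(T)$.

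Two minor points of hygiene, neither a gap. First, you deduce finiteness of $F'_r$ "since each such vertex has finite degree," referring to $V_r$, but $F'_r$ is indexed by vertices at distance $\leq r$, not $\leq r-1$; the deduction is still fine because the hypothesis gives finite degree for \emph{all} vertices of $T$, so the set of vertices at distance $\leq r$ is also finite and the faces incident to them are finitely many -- just state it that way. Second, your two inclusions really establish that the set of vertices at graph distance $\leq r-1$ is the \emph{same} subset of $B_R^*(T)=B_R^*(T')$ in both $T$ and $T'$; combined with interiority (so that the incident faces and edges of $T$ and $T'$ around each such vertex also coincide inside the dual ball), this gives equality of $B_r(T)$ and $B_r(T')$ as rooted maps, not merely one-sided containments, which is what $d_{\loc}$ requires. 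You use this implicitly; it is worth saying once. Taking $F'_r$ at distance $\leq r$ rather than $\leq r-1$ is more than needed but harmless.
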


\subsection{The spatial Markov property}

We now define precisely our spatial Markov property for infinite triangulations. We will introduce two definitions of this property and prove that they are equivalent. The difference between the two definitions is that the second includes the knowledge of which hole is filled with an infinite component, while the first one does not. The first definition (Definition~\ref{defn_markov}) is the one that is easy to observe on finite models and will be used to prove Corollaries~\ref{cor_triang_with_defects}, \ref{cor_moderate_genus} and \ref{cor_Ising}. However, in most of this work, we will use the second one, introduced in Lemma~\ref{lem_equivalence_weak_Markov}, which is more convenient to deal with infinite models.

Let $T$ be an infinite, planar triangulation, and let $t$ be a finite triangulation with $k$ holes. We recall that we write $t \subset T$ if there is a neighbourhood of the root in $T$ which is isomorphic to $t$, or equivalently if $T$ can be obtained by filling each hole of $t$ with a finite or infinite triangulation. It follows from the connectedness properties in the definition of a triangulation with holes that if $t \subset T$ then this inclusion is unique, i.e. the neighbourhood of $T$ isomorphic to $t$ is determined by $t$ and $T$.  We also write $t \subset_{\infty} T$ if $t \subset T$ and furthermore each of the holes of $t$ contains infinitely many triangles of $T$.

\begin{defn}\label{defn_markov}
Let $T$ be a random infinite planar triangulation. We say that $T$ is \emph{Markovian} if there are numbers $b^{p_1, \dots, p_k}_v$ for $k \geq 1$, $p_1, \dots, p_k \geq 1$ and $v \geq 0$ such that, for any triangulation $t$ with $k$ holes of perimeters $p_1, \dots, p_k$ and inner volume $v$:
\[ \P \left( t \subset T \right) = b^{p_1, \dots, p_k}_v. \]
\end{defn}

\begin{lem}\label{lem_equivalence_weak_Markov}
A random infinite planar triangulation $T$ is Markovian if and only if the following condition is satisfied. There are numbers $a^{p_1, \dots, p_k}_v$ for $k \geq 1$, $p_1, \dots, p_k \geq 1$ and $v \geq 0$ such that, for any triangulation $t$ with $k$ holes of perimeters $p_1, \dots, p_k$ and inner volume $v$:
\[ \P \left( t \subset_{\infty} T \right) = a^{p_1, \dots, p_k}_v. \]
\end{lem}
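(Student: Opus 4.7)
The plan is to show that the two definitions are related by an explicit linear system, based on decomposing the event $\{t \subset T\}$ according to which holes of $t$ are filled by a finite triangulation and which contain infinitely many faces. The key technical ingredient is the additivity of inner volume under the operation of filling a hole: if $t$ has $k$ holes of perimeters $p_1,\dots,p_k$ and inner volume $v$, and one fills hole $i$ with a finite triangulation $u_i$ of the $p_i$-gon of volume $n_i$, then a direct computation using \eqref{eqn_defn_inner_volume} (the new vertex count is $\widetilde{v}+n_i+1-p_i$) shows that the inner volume of the glued triangulation is $v+n_i$. This is the whole point of the volume convention.

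Given this, the forward direction ($a$'s imply $b$'s) is the following: if $t \subset T$, then there is a unique subset $S \subset [k]$ of holes filled with infinitely many faces, and for each $i \notin S$ a unique finite triangulation $u_i$ of the $p_i$-gon filling hole $i$. Writing $t\oplus(u_i)_{i\notin S}$ for the resulting triangulation with $|S|$ holes, the events in this decomposition are disjoint, and one obtains
\[
\P(t \subset T) \;=\; \sum_{S \subset [k]} \sum_{(n_i)_{i \notin S} \in \N^{k-|S|}} \Bigl(\prod_{i\notin S} \tau_{n_i}(p_i)\Bigr)\, a^{(p_i)_{i\in S}}_{v+\sum_{i\notin S} n_i},
\]
which depends only on $p_1,\dots,p_k$ and $v$. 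Convergence of the series is automatic since all terms are probabilities of disjoint events dominated by $\P(t \subset T)\leq 1$.

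The reverse direction ($b$'s imply $a$'s) uses finite inclusion–exclusion on the events $A_i=\{t \subset T,\ \text{hole } i \text{ of } t \text{ is filled by finitely many faces}\}$, so that $\{t \subset_\infty T\} = \{t \subset T\} \setminus \bigcup_i A_i$. For each $S \subset [k]$,
\[
\P\bigl(t \subset T,\ \text{holes in } S \text{ filled finitely}\bigr) \;=\; \sum_{(n_i)_{i\in S}\in \N^{|S|}} \Bigl(\prod_{i\in S}\tau_{n_i}(p_i)\Bigr)\, b^{(p_i)_{i\notin S}}_{v+\sum_{i\in S} n_i},
\]
again a convergent sum of probabilities of disjoint events. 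Inclusion–exclusion then gives
\[
\P(t \subset_\infty T) \;=\; \sum_{S \subset [k]} (-1)^{|S|}\, \P\bigl(t \subset T,\ \text{holes in } S \text{ filled finitely}\bigr),
\]
which depends only on $p_1,\dots,p_k$ and $v$.

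The main thing to watch is bookkeeping: one must check that the glued triangulation $t\oplus (u_i)$ is again a legitimate triangulation with holes with the claimed perimeters and inner volume (volume additivity), and that the decomposition of $\{t \subset T\}$ really is a disjoint union indexed by $S$ together with the fillings $(u_i)_{i\notin S}$, which follows from the uniqueness of the embedding $t \hookrightarrow T$ already noted in the excerpt. Everything else is elementary, and no compactness or exchange-of-limits issue arises because inclusion–exclusion is applied to the finite index set $[k]$ and the inner series converge absolutely.
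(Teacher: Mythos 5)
Your proof is correct, and the forward direction coincides with the paper's up to one small point: your sum over $S \subset \{1,\dots,k\}$ should exclude $S=\emptyset$ (the paper writes $I\ne\emptyset$), because the event that every hole of $t$ is filled with finitely many faces has probability zero for an infinite $T$, and the formula would otherwise invoke the undefined quantity $a^{\emptyset}_v$. The reverse direction differs in presentation: the paper proceeds by induction on the number $\ell$ of holes required to contain infinitely many faces, via the relation $\P\left(t\subset_{\infty}^{\ell+1}T\right)=\P\left(t\subset_{\infty}^{\ell}T\right)-\sum_{t'}\P\left(t\oplus_{\ell+1}t'\subset_{\infty}^{\ell}T\right)$, peeling off one hole at a time; you instead apply inclusion--exclusion over subsets $S$ of holes declared finite. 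The two arguments are equivalent (unrolling the paper's induction recovers your alternating sum), and yours is somewhat more direct; it has the analogous minor bookkeeping issue that the term $S=\{1,\dots,k\}$, which would require $b^{\emptyset}_v$, should be read as $0$ since the corresponding event is empty. Your checks of inner-volume additivity under filling a hole, of the disjointness of the decomposition via uniqueness of the embedding $t\subset T$, and of convergence of the inner series as sums of probabilities of disjoint events are all correct.
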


\begin{proof}
Assume that $T$ satisfies the condition of the lemma, and let $\left( a^{p_1, \dots, p_k}_v \right)$ be the associated constants. Let $t$ be a triangulation with $k$ holes of perimeters $p_1, \dots, p_k$ and inner volume $v$. Then the probability that $t \subset T$ can be expressed as a sum over all ways to fill some of the holes of $t$ (but not all of them) with finite triangulations of the adequate polygons. We obtain
\[ \P \left( t \subset T \right) = \sum_{\substack{ I \subset \{1, \dots, k\} \\ I \ne \emptyset}} \sum_{v_i \geq 0 \mbox{ for } i \notin I} \left( \prod_{i \notin I} \tau_{v_i}(p_i) \right) a_{v + \sum_{i \notin I} v_i}^{\left( p_i \right)_{i \in I}},\]
where by $i \notin I$ we mean $i \in \{1,\dots,k\} \backslash I$, and we recall that $\tau_v(p)$ is the number of triangulations of the $p$-gon with volume $v$. This only depends on $v$ and the $p_i$, so $T$ is Markovian.

Now let $T$ be a Markovian triangulation. For any $\ell \geq 0$ and any triangulation $t$ with $k \geq \ell$ holes, we denote by $t \subset_{\infty}^{\ell} T$ the event that $t \subset T$ and for every $i \leq \ell$, the $i$-th hole of $t$ contains infinitely many faces of $T$. We will prove by induction on $\ell$ that $\P \left( t \subset_{\infty}^{\ell} T \right)$ only depends on $\ell$ and the perimeters and inner volume of $t$. The initialization for $\ell=0$ holds because $T$ is Markovian, whereas the case $\ell=k$ will prove the lemma. For the induction step, assume the result holds for some $\ell \geq 0$, and let $t$ be a triangulation with $k \geq \ell+1$ holes of perimeters $p_1, \dots, p_k$. Then the induction follows from the identity
\[ \P \left( t \subset_{\infty}^{\ell+1} T \right) = \P \left( t \subset_{\infty}^{\ell} T \right) - \sum_{t'} \P \left( t \oplus_{\ell+1} t' \subset_{\infty}^{\ell} T \right), \]
where the sum is over all finite triangulations $t'$ of the $p_{\ell+1}$-gon, and by $t \oplus_{\ell+1} t'$ we mean the triangulation with $k-1$ holes obtained by filling the $(\ell+1)$-th hole of $t$ with $t'$. The set of values of $t'$ only depends on $p_{\ell+1}$ and each term on the right-hand side only depends on $\ell$ and the perimeters and inner volume of $t$. Therefore, so does the left-hand side, which proves the lemma.
\end{proof}

\subsection{Combinatorics and infinite models}\label{subsec_combinatorics}

\paragraph{Counting triangulations of polygons.}
We now recall the exact enumeration of triangulations of polygons. For $n \geq 0$ and $p \geq 1$, we recall that $\tau_n(p)$ is the number of triangulations of the $p$-gon with volume $n$ (i.e. $n+1$ vertices in total). We also write
\[ Z_p(\lambda)= \sum_{n \geq 0} \tau_n(p) \lambda^n \]
for the generating function of triangulations of the $p$-gon, and finally $\mathcal{Z}_{\lambda}(x) = \sum_{p \geq 1} Z_p(\lambda) x^p$. By exact enumeration results of Krikun\footnote{More precisely, using the Euler formula to express the number of vertices (our notion of volume) in terms of the number of edges (Krikun's notion of volume), we have $\mathcal{Z}_{\lambda}(x)=U_0(\lambda^{1/3}, \lambda^{1/3}x)$, where $U_0$ is the explicit function of~\cite[Section 2.2]{Kri07}.}~\cite{Kri07}, we have $Z_p(\lambda)<+\infty$ if and only if $\lambda \leq \lambda_c := \frac{1}{12\sqrt{3}}$. Moreover, for $\lambda \in [0,\lambda_c]$, we have
\begin{equation}\label{eqn_computation_Z}
\mathcal{Z}_{\lambda}(x) = \frac{1}{2} \left( -1+x+\left(1-\frac{1}{\sqrt{1+8h}}x \right) \sqrt{1-\frac{4h}{\sqrt{1+8h}}x} \, \right),
\end{equation}
where $h \in \left[ 0,\frac{1}{4} \right]$ is such that 
\begin{equation}\label{eqn_defn_h}
\lambda=\frac{h}{(1+8h)^{3/2}}.
\end{equation}
Note that the expression of $\mathcal{Z}$ here is slightly different from how it usually appears because of our different volume convention. More precisely,

Finally, note also that $\mathcal{Z}_0(x)=0$.

\paragraph{The UIPT and the PSHT.}
We now define precisely the PSHT $(\TT_{\lambda})_{0 \leq \lambda \leq \lambda_c}$, where we recall that $\lambda_c=\frac{1}{12\sqrt{3}}$. The PSHT were introduced in~\cite{CurPSHIT} in the type-II setting, and their definition was extended to type-I triangulations in~\cite{B16}. They are random one-ended infinite, planar triangulations, characterized by the following property: for any triangulation $t$ with one hole of perimeter $p$ and inner volume $v$, we have
\begin{equation}\label{eqn_defn_PSHT}
\P \left( t \subset \TT_{\lambda} \right) = C_p^{PSHT}(\lambda) \lambda^{v}.
\end{equation}
Moreover, we have $C_1^{PSHT}(\lambda)=1$ and $\left( C_p^{PSHT}(\lambda) \right)_{p \geq 1}$ satisfies the following recursion:
\begin{equation}\label{eqn_Cp_psht}
C_p^{PSHT}(\lambda) = C_{p+1}^{PSHT}(\lambda) + 2 \sum_{i=0}^{p-1} Z_{i+1}(\lambda) C_{p-i}^{PSHT}(\lambda).
\end{equation}
This recursion was obtained in~\cite{CurPSHIT} and it was observed in~\cite{B16} that it yields the exact formula $C_p^{PSHT}(\lambda)=\frac{1}{(1+8h)^{p/2}} \sum_{q=0}^{p-1} \binom{2q}{q} h^q$, where $h$ is given by~\eqref{eqn_defn_h}. Note that our formula for $C_p^{PSHT}(\lambda)$ differs from the one obtained in~\cite{B16} by a factor $\lambda^p$. This is because $v$ now denotes the inner volume instead of the total number of vertices, and the reason for this change of convention is that now each factor in~\eqref{eqn_defn_PSHT} still makes sense for $\lambda=0$ (with the convention of~\cite{B16}, one factor would go to $+\infty$ and the other to $0$).

In particular, the random map $\TT_{\lambda_c}$ is the UIPT. On the other hand, for $\lambda=0$, we get $C_p^{PSHT}(0)=1$ and $\P \left( t \subset \TT_{\lambda} \right)=\mathbbm{1}_{v=0}$. Therefore, the triangulation $\TT_0$ is the dual of a complete binary tree (i.e. the triangulation depicted on the left of Figure~\ref{fig_two_degenerate_examples}).

\paragraph{The one-vertex triangulation $\TT_{\star}$.}
We finally introduce another deterministic example of degenerate Markovian triangulation. We denote by $\TT_{\star}$ the unique infinite planar triangulation with only one vertex. One way to construct $\TT_{\star}$ is to start from three loops on the same vertex forming a triangle, and then recursively add two loops to form a new triangle inside of each loop.

\begin{lem}\label{lem_Tstar}
The triangulation $\TT_{\star}$ is Markovian with
\[a^{p_1, \dots, p_k}_v =
\begin{cases}
1 & \mbox{if $p_1=\dots=p_k=1$ and $v=0$,}\\
0 & \mbox{else.}
\end{cases}\]
\end{lem}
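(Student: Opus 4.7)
The plan is to apply Lemma~\ref{lem_equivalence_weak_Markov}: since $\TT_{\star}$ is a deterministic triangulation, $\P \left( t \subset_{\infty} \TT_{\star} \right) \in \{0,1\}$, so it suffices to show that the event $t \subset_{\infty} \TT_{\star}$ depends on $t$ only through $(p_1, \dots, p_k, v)$ and takes the announced values. The constants $a^{p_1, \dots, p_k}_v$ are then read off directly.

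For the vanishing regime, the key observation is that $\TT_{\star}$ has a single vertex, hence any finite sub-triangulation $t \subset \TT_{\star}$ satisfies $\widetilde{v} = 1$. Substituting into~\eqref{eqn_defn_inner_volume} gives $v + \sum_{i=1}^k (p_i - 1) = 0$, and since each summand is nonnegative this forces $v = 0$ and $p_i = 1$ for all $i$. Consequently $a^{p_1, \dots, p_k}_v = 0$ whenever one of these equalities fails.

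For the other direction, suppose $p_1 = \dots = p_k = 1$ and $v = 0$. The conventional cases $t^1_0$ and $t^{1,1}_0$ are immediate: $t^1_0 \subset T$ holds for any $T$, and $t^{1,1}_0 \subset \TT_{\star}$ because the root edge of $\TT_{\star}$ is a loop; in both cases the holes are filled by infinitely many triangles since $\TT_{\star}$ is infinite. For $k \geq 3$, Euler's formula combined with the constraints forces $t$ to have exactly $k-2$ triangular faces joined along $k-3$ shared loop-edges, so the triangle-dual graph of $t$ is a finite planar tree with maximum degree $3$. The triangle-dual graph of $\TT_{\star}$ itself is the infinite $3$-regular tree (each triangle meets another triangle across each of its three loops), which in particular ensures that every loop of $\TT_{\star}$ bounds an infinite region on each side, so that each hole of an embedded copy of $t$ inside $\TT_{\star}$ must contain infinitely many faces.

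The main subtlety is constructing a root-preserving embedding of $t$ into $\TT_{\star}$, which I would do by induction on $k$. The root edge of $t$ is a loop bordered by two faces; on each side of this loop the incident face is either a hole (handled trivially, since it will be filled by the corresponding half of $\TT_{\star}$) or a triangle, which one identifies with the corresponding incident triangle of $\TT_{\star}$. Peeling this triangle off leaves two smaller triangulations attached along its remaining two loops, each again satisfying the constraints $p_i=1$ and $v=0$ (with strictly fewer triangles), so the induction hypothesis applies. The self-similarity of $\TT_{\star}$ guarantees that the two remaining loops of the matched triangle in $\TT_{\star}$ each bound an isomorphic copy of the full $\TT_{\star}$ structure, so the inductive embeddings fit together to yield $t \subset_{\infty} \TT_{\star}$.
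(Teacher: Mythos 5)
Your proof is correct and follows essentially the same route as the paper's: use the one-vertex constraint together with equation~\eqref{eqn_defn_inner_volume} to force $p_i=1$ and $v=0$, then observe that any $t$ with these parameters consists of nested loops on a single vertex whose dual is a finite subtree of the infinite binary tree, hence embeds in $\TT_{\star}$. You supply somewhat more detail (the Euler-formula face count and the explicit peeling induction) where the paper simply appeals to planarity and the binary-tree nesting structure, but the underlying argument is the same.
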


\begin{proof}
First, if $t \subset \TT_{\star}$, all the edges of $\TT_{\star}$ are loops, so all the holes of $t$ must have perimeter $1$. Moreover $\TT_{\star}$ has only one vertex, so the inner volume of $t$ must be $0$ by~\eqref{eqn_defn_inner_volume}. On the other hand, if $t$ is a triangulation with holes of perimeter $1$ and inner volume $0$, then it follows from~\eqref{eqn_defn_inner_volume} that $t$ has only one vertex, and all the edges are loops. By planarity, each loop of $t$ separates $t$ in two, so $t$ consists of loops on the same vertex whose nesting structure is that of a binary tree. This implies $t \subset \TT_{\star}$.
\end{proof}

\section{Proof of the main theorem}

To prove Theorem~\ref{main_thm}, we will rely on the characterization of Markovian triangulations given by Lemma~\ref{lem_equivalence_weak_Markov}. More precisely, let $T$ be a Markovian triangulation and let $a^{p_1, \dots, p_k}_v=\P \left( t \subset_{\infty} T \right)$ for a triangulation $t$ with $k$ holes of perimeters $p_1, \dots, p_k$ and inner volume $v$. Then these coefficients satisfy the following linear equations, that we will call the \emph{peeling equations}:
\[ a^{p_1,\dots,p_k}_v = a^{p_1+1,p_2,\dots,p_k}_{v} + 2 \sum_{i=0}^{p_1-1} \sum_{j \geq 0} a^{p_1-i,p_2,\dots,p_k}_{v+j} \times \tau_j(i+1) + \sum_{i=0}^{p_1-1} a^{i+1, p_1-i, p_2,\dots,p_k}_{v}.\]
To obtain this equation, we first note that for $v=0$, $k=1$ and $p_1=1$, the equation consists of distinguishing whether the root edge is a loop or not, and whether one side of this loop is filled with a finite region. Note also that $a^1_0=1$. In all other cases, consider a triangulation $t$ with $k$ holes of perimeters $p_1, \dots, p_k$ and inner volume $v$, and assume $t \subset T$. We fix an edge $e$ on the boundary of the first hole of $t$ and explore the triangle $f$ of $T \backslash t$ incident to $e$ (see Figure~\ref{fig_peeling_cases}). By planarity, either the third vertex $x$ of $f$ (other than the two ends of $e$) does not belong to $t$, or $x$ lies on the boundary of the first hole. The first term in the peeling equation corresponds to the case where $x$ is not in $t$. The first sum corresponds to the case where $f$ separates $T \backslash t$ into a finite part with perimeter $i+1$ and $j+1$ vertices in total, and an infinite part. The factor $2$ comes from the possibility that the finite component lies either on the left or on the right of $f$. Finally, the second sum corresponds to the case where $f$ splits the first hole into two holes of perimeters $i+1$ and $p_1-i$, each of which contains infinitely many triangles of $T$. This last case is the one which is new compared to the one-ended case.

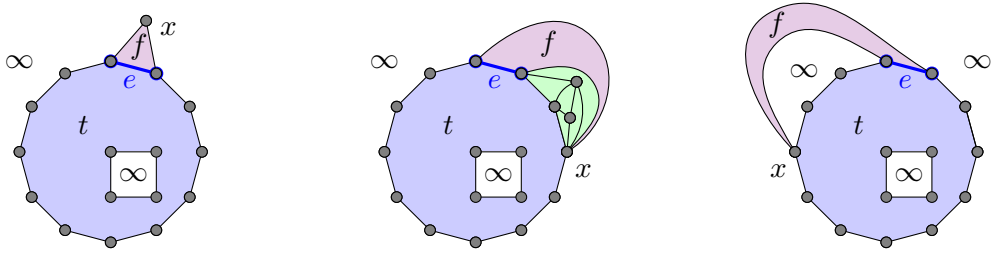
\begin{figure}
	\begin{center}
		\begin{tikzpicture}[scale=1.2]
	\fill[color=violet!20]
		(90:1)--(60:1)--(75:1.5)--(90:1);
	\fill[color=blue!20] (0:1)--(30:1)--(60:1)--(90:1)--(120:1)--(150:1)--(180:1)--(210:1)--(240:1)--(270:1)--(300:1)--(330:1)--(0:1);
	\fill[color=white] (0,0)--(0.5,0)--(0.5,-0.5)--(0,-0.5)--(0,0);
	\draw (0:1) node{}--(30:1) node{};
	\draw (30:1) node{}--(60:1) node{};
	\draw[blue, very thick] (60:1) node{} -- (90:1) node{};
	\draw (90:1) node{}--(120:1) node{};
	\draw (120:1) node{}--(150:1) node{};
	\draw (150:1) node{}--(180:1) node{};
	\draw (180:1) node{}--(210:1) node{};
	\draw (210:1) node{}--(240:1) node{};
	\draw (240:1) node{}--(270:1) node{};
	\draw (270:1) node{}--(300:1) node{};
	\draw (300:1) node{}--(330:1) node{};
	\draw (330:1) node{}--(0:1) node{};
	\draw (60:1) node{}--(75:1.5) node{};
	\draw (90:1) node{}--(75:1.5) node{};
	\draw (0,0) node{} -- (0.5,0) node{} -- (0.5,-0.5) node{} -- (0,-0.5) node{} -- (0,0) node{};
	
	\draw[blue](75:0.8)node[texte]{$e$};
	\draw(65:1.5)node[texte]{$x$};
	\draw(-0.3,0.3)node[texte]{$t$};
	\draw(-1,1)node[texte]{$\infty$};
	\draw(0.25,-0.25)node[texte]{$\infty$};
	\draw(75:1.2)node[texte]{$f$};		
		
	\begin{scope}[shift={(4,0)}]
	\fill[color=violet!20]
		(90:1)--(60:1) to[out=15,in=120] (30:1.5) to[out=300,in=45] (0:1) to[out=30,in=315] (45:1.8) to[out=135,in=45] (90:1);
	\fill[color=blue!20] (0:1)--(30:1)--(60:1)--(90:1)--(120:1)--(150:1)--(180:1)--(210:1)--(240:1)--(270:1)--(300:1)--(330:1)--(0:1);
	\fill[color=white] (0,0)--(0.5,0)--(0.5,-0.5)--(0,-0.5)--(0,0);
		\draw (0:1) node{}--(30:1) node{};
	\fill[color=green!20]
		(60:1) to[out=15,in=120] (30:1.5) to[out=300,in=45] (0:1) -- (30:1) -- (60:1);
	\draw (0:1) node{}--(30:1) node{};
	\draw (30:1) node{}--(60:1) node{};
	\draw[blue, very thick] (60:1) node{}--(90:1) node{};
	\draw (90:1) node{}--(120:1) node{};
	\draw (120:1) node{}--(150:1) node{};
	\draw (150:1) node{}--(180:1) node{};
	\draw (180:1) node{}--(210:1) node{};
	\draw (210:1) node{}--(240:1) node{};
	\draw (240:1) node{}--(270:1) node{};
	\draw (270:1) node{}--(300:1) node{};
	\draw (300:1) node{}--(330:1) node{};
	\draw (330:1) node{}--(0:1) node{};
	\draw (60:1) node{} to[out=15,in=120] (30:1.5) to[out=300,in=45] (0:1) node{};
	\draw (0:1) node{} to[out=30,in=315] (45:1.8) to[out=135,in=45] (90:1) node{};
	\draw(35:1.35) node{} to[bend left] (0:1) node{};
	\draw(35:1.35) node{} to[bend right] (30:1) node{};
	\draw(35:1.35) node{}--(60:1) node{};
	\draw(20:1.1)node{}--(35:1.35) node{};
	\draw(20:1.1)node{}--(30:1) node{};
	\draw(20:1.1)node{}--(0:1) node{};
	\draw (0,0) node{} -- (0.5,0) node{} -- (0.5,-0.5) node{} -- (0,-0.5) node{} -- (0,0) node{};
	
	\draw[blue](75:0.8)node[texte]{$e$};
	\draw(-10:1.2)node[texte]{$x$};
	\draw(-0.3,0.3)node[texte]{$t$};
	\draw(-1,1)node[texte]{$\infty$};
	\draw(0.25,-0.25)node[texte]{$\infty$};
	\draw(0.8,1.2)node[texte]{$f$};
	\end{scope}
		
	\begin{scope}[shift={(8.5,0)}]
	\fill[color=violet!20]
	(60:1) to[out=150,in=30] (130:2) to[out=210,in=135] (180:1) to[out=120,in=225] (135:1.7) to[out=45,in=150] (90:1)--(60:1);
	\fill[color=blue!20] (0:1)--(30:1)--(60:1)--(90:1)--(120:1)--(150:1)--(180:1)--(210:1)--(240:1)--(270:1)--(300:1)--(330:1)--(0:1);
	\fill[color=white] (0,0)--(0.5,0)--(0.5,-0.5)--(0,-0.5)--(0,0);
	\draw (0:1) node{}--(30:1) node{};
	\draw (0:1) node{}--(30:1) node{};
	\draw (30:1) node{}--(60:1) node{};
	\draw[blue, very thick] (60:1) node{}--(90:1) node{};
	\draw (90:1) node{}--(120:1) node{};
	\draw (120:1) node{}--(150:1) node{};
	\draw (150:1) node{}--(180:1) node{};
	\draw (180:1) node{}--(210:1) node{};
	\draw (210:1) node{}--(240:1) node{};
	\draw (240:1) node{}--(270:1) node{};
	\draw (270:1) node{}--(300:1) node{};
	\draw (300:1) node{}--(330:1) node{};
	\draw (330:1) node{}--(0:1) node{};
	\draw (60:1) node{} to[out=150,in=30] (130:2) to[out=210,in=135] (180:1) node{};
	\draw (180:1) node{} to[out=120,in=225] (135:1.7) to[out=45,in=150] (90:1) node{};
	\draw (0,0) node{} -- (0.5,0) node{} -- (0.5,-0.5) node{} -- (0,-0.5) node{} -- (0,0) node{};
	
	\draw[blue](75:0.8)node[texte]{$e$};
	\draw(190:1.2)node[texte]{$x$};
	\draw(-0.3,0.3)node[texte]{$t$};
	\draw(1,1)node[texte]{$\infty$};
	\draw(0.25,-0.25)node[texte]{$\infty$};
	\draw(-1.2,1.4)node[texte]{$f$};
	\draw(-0.9,0.9)node[texte]{$\infty$};	
	\end{scope}
	\end{tikzpicture}
\end{center}
\caption{The three cases appearing in the peeling equations. The case in the middle corresponds to $(i,j)=(2,4)$ in the first sum. The case on the right corresponds to $i=3$ in the second.}\label{fig_peeling_cases}
\end{figure}

The main steps of the proof of Theorem~\ref{main_thm} will be as follows: in Section~\ref{subsec_solve_peeling}, we will express the solutions to the peeling equations in terms of a pair $(\Lambda, \Gamma)$ of random variables (Proposition~\ref{prop_sol_peeling}). These two variables can be thought of as Boltzmann weights on respectively the volume and the number of infinite ends. The PSHT $\TT_{\lambda}$ corresponds to the case $\Lambda=\lambda$ and $\Gamma=0$, whereas $\TT_{\star}$ corresponds to the case $\Lambda=0$ and $\Gamma=1$. In Section~\ref{subsec_degenerate_or_oneended}, using the formula obtained in Proposition~\ref{prop_sol_peeling} and the nonnegativity of the coefficients $a_v^{p_1, \dots, p_k}$, we will prove that almost surely $\Lambda=0$ or $\Gamma=0$. Finally, in Section~\ref{subsec_end_proof}, we will exclude the case $0<\Gamma<1$ using a similar strategy.

\subsection{Solving the peeling equations}\label{subsec_solve_peeling}

The ideas used in the first part of the proof will be very similar to the ideas used for the one-ended case in~\cite[Section 3]{BL19}: we will use the Hausdorff moment problem to interpret the coefficients $a_v^{1, \dots, 1}$ as the moments of certain random variables. We first notice that $T$ is characterized by these coefficients. We will use the notation $k \otimes 1$ for $1, \dots, 1$ where $1$ appears $k$ times.

\begin{lem}\label{lem_determined_by_one}
The distribution of $T$ is determined by the numbers $a_v^{k \otimes 1}$ for $v \geq 0$ and $k \geq 1$.
\end{lem}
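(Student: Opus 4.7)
The plan is to combine two ingredients: (i) Lemma~\ref{lem_equivalence_weak_Markov} (together with the formula appearing in its proof) shows that the distribution of $T$ is determined by the full family of coefficients $\left(a_v^{p_1, \dots, p_k}\right)$, since cylinder events $\{t \subset T\}$ generate the Borel $\sigma$-algebra of $\T_{\infty}$ and each such probability is an absolutely convergent linear combination of these coefficients; and (ii) the peeling equations give a recursion expressing every coefficient with perimeters not all equal to $1$ in terms of "smaller" ones. Combining these two facts reduces the lemma to showing that every $a_v^{p_1, \dots, p_k}$ is determined by the subfamily $\left(a_v^{k \otimes 1}\right)_{k,v}$.

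The heart of the argument is an induction on $S := \sum_{i=1}^k (p_i - 1)$. The base case $S = 0$ is exactly the given subfamily. For the inductive step I would first note that $a_v^{p_1, \dots, p_k}$ is symmetric under permutation of its upper indices (relabeling the holes of $t$ does not change the inclusion event in $T$), so for a coefficient of $S$-value $S + 1$ I may assume $p_1 \geq 2$. Applying the peeling equation to the tuple $(p_1 - 1, p_2, \dots, p_k)$ and solving for $a_v^{p_1, p_2, \dots, p_k}$ yields
\begin{align*}
 a_v^{p_1, p_2, \dots, p_k} = \, & a_v^{p_1 - 1, p_2, \dots, p_k} - 2\sum_{i=0}^{p_1-2}\sum_{j \geq 0} a_{v+j}^{p_1 - 1 - i, p_2, \dots, p_k} \, \tau_j(i+1) \\
 & - \sum_{i=0}^{p_1-2} a_v^{i+1, p_1-1-i, p_2, \dots, p_k}.
\end{align*}
A short bookkeeping check shows that every coefficient on the right has $S$-value at most $S$: the terms in the first (double) sum have $S$-value $S - i$ for $0 \leq i \leq p_1 - 2$, while in the splitting terms the two new perimeters contribute $i + (p_1 - 2 - i) = p_1 - 2$, exactly what the unsplit perimeter $p_1 - 1$ contributed, so their $S$-value is still $S$. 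The induction hypothesis then provides every coefficient on the right, and we recover $a_v^{p_1, p_2, \dots, p_k}$.

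The only technical points are the index arithmetic just sketched and the absolute convergence of the inner sum over $j$, the latter being immediate from the probabilistic interpretation of the peeling equation as a decomposition into pairwise disjoint events of total probability at most $1$. Neither is a serious obstacle, so the statement should follow in a few lines with essentially no combinatorial input beyond the peeling identities themselves.
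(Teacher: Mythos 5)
Your proposal is correct and follows essentially the same argument as the paper: an induction on $\sum_i (p_i - 1)$, with the inductive step given by rewriting the peeling equation for the tuple $(p_1-1, p_2, \dots, p_k)$ and solving for $a_v^{p_1,\dots,p_k}$. Your bookkeeping of the perimeter deficits on the right-hand side is accurate (all terms have strictly smaller $\sum(p_i-1)$ than the left-hand side), and the extra remarks about symmetry under permutation of the holes and convergence of the $j$-sum are fine but not emphasized in the paper's proof.
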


\begin{proof}
It is sufficient to prove that for all $p_1, \dots, p_k \geq 1$ and $v \geq 0$, the coefficient $a_v^{p_1, \dots, p_k}$ is determined by $\left( a_v^{\ell \otimes 1} \right)_{\ell \geq 1, \, v \geq 0}$. We prove this by induction on $\sum_{i=1}^k (p_i-1)$. If this sum is zero, then $p_i=1$ for all $i$ so the result is immediate.
	
Moreover, if $\sum_{i=1}^k (p_i-1)>0$, assume without loss of generality $p_1 \geq 2$. Then the peeling equation can be rewritten
\[ a_v^{p_1, \dots, p_k} = a_v^{p_1-1, p_2, \dots, p_k} -2 \sum_{i=0}^{p_1-2} \sum_{j \geq 0} a_{v+j}^{p_1-1-i, p_2, \dots, p_k} \tau_j(i+1) - \sum_{i=0}^{p_1-2} a_v^{i+1, p_1-1-i,p_2, \dots, p_k}. \]
By the induction hypothesis, all the terms in the right-hand side are determined by $\left( a_v^{\ell \otimes 1} \right)_{\ell \geq 1, \, v \geq 0}$, so this is also true for the left-hand side, which proves the lemma.
\end{proof}

The next step, which follows closely~\cite[Section 3]{BL19}, is to prove that the $a_v^{k \otimes 1}$ are given by the moments of a pair of random variables.

\begin{lem}\label{lem_apv_as_moments}
There is a random variable $\left( \Lambda, \Gamma \right)$ with values in $[0, \lambda_c] \times [0,1]$ such that, for all $k \geq 1$ and $v \geq 0$, we have:
\begin{equation}\label{eqn_apv_as_moments}
a_v^{k \otimes 1} = \E \left[ \Lambda^{v} \Gamma^{k-1} \right].
\end{equation}
\end{lem}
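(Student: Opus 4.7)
The plan is to apply the two-dimensional Hausdorff moment theorem on $[0, \lambda_c] \times [0, 1]$ to the doubly-indexed sequence $b_v^k := a_v^{k \otimes 1}$. Setting $c_{v,n} := b_v^{n+1}$ for $v, n \geq 0$, I aim to produce a probability measure $\mu$ on $[0, \lambda_c] \times [0, 1]$ such that
\[
c_{v,n} = \int_{[0, \lambda_c] \times [0,1]} \lambda^v \gamma^n \, d\mu(\lambda, \gamma)
\]
for all $v, n \geq 0$. Taking $(\Lambda, \Gamma)$ with distribution $\mu$ then yields \eqref{eqn_apv_as_moments}.

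The normalization $c_{0,0} = b_0^1 = \P(t_0^1 \subset_\infty T) = 1$ holds because $T$ is infinite and $t_0^1$ is the trivial triangulation. The core task is to verify the Hausdorff complete-monotonicity condition
\[
\Delta^{v,n}_{m,\ell} := \sum_{i=0}^m \sum_{j=0}^\ell \binom{m}{i}\binom{\ell}{j}(-1)^{i+j}\lambda_c^{m-i}\, c_{v+i,\, n+j} \geq 0
\]
for all $v, n, m, \ell \geq 0$. Complete monotonicity in the $n$-direction (the case $m = 0$) follows from iterated applications of the peeling equation for $p_1 = 1$. Specializing it to $k$ holes of perimeter $1$ gives
\[
b_v^k - b_v^{k+1} = a_v^{2, (k-1) \otimes 1} + 2 \sum_{j \geq 0} b_{v+j}^k \tau_j(1) \geq 0,
\]
and iterating this identity, together with Lemma~\ref{lem_determined_by_one} to re-express the $a_v^{2, (k-1) \otimes 1}$ as positive combinations of $b$'s via further peeling, yields $\Delta^{v,n}_{0,\ell} \geq 0$.

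The $v$-direction monotonicity, which encodes the bound $\Lambda \leq \lambda_c$, is the more delicate step. The factor $\lambda_c$ appears precisely because the series $\mathcal{Z}_\lambda(x)$ has radius of convergence $\lambda_c$ in $\lambda$ by~\cite{Kri07}. Following the strategy of the one-ended case~\cite[Section~3]{BL19}, I would compare the peeling equations for $T$ with the identities they impose on the critical PSHT weights $C_p^{PSHT}(\lambda_c)\lambda_c^v$, and combine this comparison with the explicit formulas \eqref{eqn_computation_Z}--\eqref{eqn_defn_h} and the recursion \eqref{eqn_Cp_psht} to rewrite $\Delta^{v,n}_{m,\ell}$ as a convergent sum of nonnegative quantities, interpretable as probabilities of events in a peeling exploration ``tilted'' by $\lambda_c$. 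Once all $\Delta^{v,n}_{m,\ell} \geq 0$ is established, the two-dimensional Hausdorff moment theorem furnishes the desired measure $\mu$.

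The main obstacle is the $v$-direction monotonicity: the bound $\Lambda \leq \lambda_c$ is a critical phenomenon of the triangulation generating function and is not an elementary term-by-term consequence of the peeling equations. One must essentially reuse the argument of~\cite{BL19}; the new ingredient is to carry the extra index $n$ (number of holes) through the manipulations, which is mostly a bookkeeping matter since the $n$-direction and $v$-direction conditions factor cleanly in the Hausdorff criterion.
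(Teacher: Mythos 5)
Your proposal diverges from the paper's proof in two ways that create genuine gaps, and the second is not merely a matter of bookkeeping.

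First, for the $n$-direction monotonicity, you specialize the peeling equation to $p_1 = 1$ and then attempt to ``re-express the $a_v^{2,(k-1)\otimes 1}$ as positive combinations of $b$'s via further peeling'' using Lemma~\ref{lem_determined_by_one}. But the rearranged peeling equation used in Lemma~\ref{lem_determined_by_one} writes $a_v^{2,\dots}$ as $a_v^{1,\dots}$ \emph{minus} two nonnegative sums, so the resulting expansion is not a positive combination of $b$'s, and iterating it does not obviously give $\Delta_k^{\ell} b_v^k \geq 0$. The paper sidesteps this by proving the more general claim $\Delta_v^m \Delta_k^n a_v^{p_1,\dots,p_k} \geq 0$ for \emph{arbitrary} hole perimeters $p_1,\dots,p_k$, by induction on $m+n$. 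In the induction step, the peeling equation expresses $\Delta_v^m \Delta_k^n a_v^{p_1+1,p_2,\dots,p_k}$ (nonnegative by the induction hypothesis) as $\Delta_v^m \Delta_k^n a_v^{p_1,\dots,p_k}$ minus two sums of nonnegative terms; dropping everything except the $i=0$ term of the second sum (resp.\ the $i=0, j=1$ term of the first) gives $\Delta_v^m \Delta_k^{n+1} a_v^{p_1,\dots,p_k} \geq 0$ (resp.\ $\Delta_v^{m+1} \Delta_k^n \geq 0$) with no re-expansion required. Working with general $p_i$ is not optional here: the induction genuinely needs it, and it also fixes your $n$-direction argument.

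Second, and more fundamentally, you try to bake the bound $\Lambda \leq \lambda_c$ directly into the Hausdorff moment criterion on $[0,\lambda_c] \times [0,1]$, with the $\lambda_c^{m-i}$ factors, and you correctly observe that you cannot finish this step (``one must essentially reuse the argument of [BL19]''). This is an unresolved hole, and it is not how the paper proceeds. The paper first applies the two-dimensional Hausdorff moment theorem on $[0,1]^2$ --- where the $v$-direction monotonicity \emph{is} an elementary term-by-term consequence of the peeling equation, exactly as above --- to obtain $(\Lambda,\Gamma)$ with values in $[0,1]^2$. It then obtains $\Lambda \leq \lambda_c$ a.s.\ \emph{after the fact} by a one-line argument: the peeling equation for $v=0$, $k=1$, $p_1=1$ gives $\E[Z_1(\Lambda)] = \sum_j \tau_j(1)\,\E[\Lambda^j] \leq a_0^1 < \infty$, hence $Z_1(\Lambda) < \infty$ a.s., hence $\Lambda \leq \lambda_c$ a.s.\ by the enumeration results of Section~\ref{subsec_combinatorics}. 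This decoupling is what makes the lemma tractable; the ``critical phenomenon'' you correctly identify is captured by the divergence of $Z_1$, not by a harder Hausdorff verification.
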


\begin{proof}
We define the discrete derivative operators $\Delta_v$ and $\Delta_k$ by
\[ \left( \Delta_v a \right)_v^{p_1, \dots, p_k} = a^{p_1, \dots, p_k}_v - a^{p_1, \dots, p_k}_{v+1}\]
and
\[ \left( \Delta_k a \right)_v^{p_1, \dots, p_k} = a^{p_1, \dots, p_k}_v - a^{p_1, \dots, p_k,1}_{v}.\]
Note that $\left( \Delta_v a^{k \otimes 1}_v \right)$ and $\left( \Delta_k a^{k \otimes 1}_v \right)$ are respectively the discrete derivatives of $\left( a^{k \otimes 1}_v \right)$ with respect to $v$ and to $k$. Since $a_0^1=1$, by the two-dimensional Hausdorff moment problem~\cite{HS33}, it is sufficient to prove $\Delta_v^{m} \Delta_k^n a_v^{k \otimes 1} \geq 0$ for all $m,n,v \geq 0$ and $k \geq 1$. We will actually prove the following more general inequality for all $m,n,v \geq 0$ and $p_1, \dots, p_k \geq 1$:
\begin{equation}\label{eqn_derivatives_nonnegative}
\Delta_v^m \Delta_k^n a_v^{p_1, \dots, p_k} \geq 0.
\end{equation}
We will prove~\eqref{eqn_derivatives_nonnegative} by induction on $m+n$. Although we only need the case $p_1=\dots=p_k=1$ in the end, handling the general case will be necessary in the induction step.

The case $m=n=0$ is immediate since the coefficients $a_v^{p_1, \dots, p_k}$ are nonnegative. We now assume the result holds for $(m,n)$, and prove it for $(m,n+1)$. Let $v \geq 0$ and $p_1, \dots, p_k \geq 1$. Using the induction hypothesis and writing the peeling equation for $a_{v+v'}^{p_1, \dots, p_k, k' \otimes 1}$ for $0 \leq v' \leq m$ and $0 \leq k' \leq n$, we have:
\begin{align*}
0 & \leq \Delta^m_v \Delta_k^n a_v^{p_1+1, p_2, \dots, p_k}\\
&= \Delta^m_v \Delta_k^n a_v^{p_1, p_2, \dots, p_k} -2 \sum_{i=0}^{p_1-1} \sum_{j \geq 0} \tau_j(i+1) \Delta^m_v \Delta_k^n a_{v+j}^{p_1-i, p_2, \dots, p_k} - \sum_{i=0}^{p_1-1} \Delta^m_v \Delta_k^n a_v^{i+1, p_1-i, p_2, \dots, p_k}.
\end{align*}
By the induction hypothesis, every term in the two sums is nonnegative. Therefore, the last inequality remains true if we remove entirely the first sum, and keep only the term $i=0$ in the second one. We obtain
\[ 0 \leq \Delta^m_v \Delta_k^n a_v^{p_1, p_2, \dots, p_k} - \Delta^m_v \Delta_k^n a_v^{1, p_1, p_2, \dots, p_k} = \Delta^m_v \Delta_k^{n+1} a_v^{p_1, p_2, \dots, p_k}, \]
which proves the result for $(m,n+1)$. The argument to deduce the result for $(m+1,n)$ from $(m,n)$ is the same, but this time we keep only the term $i=0, j=1$ in the first sum and remove the factor $2$, exactly as in~\cite[Lemma 16]{BL19}. This proves the claim~\eqref{eqn_derivatives_nonnegative}.

Therefore, by the Hausdorff moment problem, there is a random variable $(\Lambda, \Gamma)$ with values in $[0,1]^2$ such that~\eqref{eqn_apv_as_moments} holds for all $k \geq 1$ and $v \geq 0$. To conclude, we only need to show $\Lambda \leq \lambda_c$ almost surely. For this, by the peeling equation for $v=0$, $k=1$ and $p_1=1$, we must have:
\[\E \left[ Z_1(\Lambda) \right] = \sum_{j \geq 0} \tau_j(1) \E \left[ \Lambda^j \right] = \sum_{j \geq 0} a^1_{j} \tau_j(1) \leq a_0^1 < +\infty. \]
Hence, we must have $Z_1(\Lambda) <+\infty$ a.s.. By the results of Section~\ref{subsec_combinatorics}, this means $\Lambda \leq \lambda_c$ a.s..
\end{proof}

We note that by~\eqref{eqn_defn_PSHT} and~\eqref{eqn_Cp_psht}, the PSHT $\TT_{\lambda}$ for $\lambda \in [0,\lambda_c]$ corresponds to the case where $(\Lambda,\Gamma)=(\lambda,0)$ a.s.. On the other hand, by Lemma~\ref{lem_Tstar}, the Markovian triangulation $\TT_{\star}$ corresponds to the case $(\Lambda,\Gamma)=(0,1)$ a.s.. Therefore, proving the main theorem is equivalent to showing that almost surely, we have either $\Gamma=0$ or $(\Lambda,\Gamma)=(0,1)$.

It follows from Lemmas~\ref{lem_determined_by_one} and~\ref{lem_apv_as_moments} that the coefficients $a_v^{p_1, \dots, p_k}$ are characterized by the law of $(\Lambda, \Gamma)$. Our next step is to give an explicit formula for these coefficients in terms of $(\Lambda, \Gamma)$.

We recall from Section~\ref{subsec_combinatorics} that $Z_i(\lambda)$ is the partition function of Boltzmann triangulations of the $i$-gon with Boltzmann weight $\lambda$ on the volume. For all $(\lambda, \gamma) \in [0,\lambda_c] \times [0,1]$, let $\left( C_p(\lambda,\gamma) \right)_{p \geq 1}$ be the sequence satisfying $C_1(\lambda,\gamma)=1$ and, for all $p \geq 1$:
\begin{equation}\label{eqn_peeling_Cp}
C_p(\lambda,\gamma) = C_{p+1}(\lambda,\gamma) + 2 \sum_{i=0}^{p-1} Z_{i+1}(\lambda) C_{p-i}(\lambda,\gamma) + \gamma \sum_{i=0}^{p-1} C_{i+1}(\lambda, \gamma) C_{p-i}(\lambda, \gamma).
\end{equation}
Note that this formula defines $C_p(\lambda, \gamma)$ in a nonambiguous way, since it allows to express $C_{p+1}$ using only previous terms. We note that for $\gamma=0$, we recover~\eqref{eqn_Cp_psht}, which means that $C_p(\lambda,0)=C_p^{PSHT}(\lambda)$. We also note right now that, by induction on $p$, the function $C_p(\lambda, \gamma)$ is a continuous function of $(\lambda, \gamma) \in [0,\lambda_c] \times [0,1]$. In particular, it is bounded by a constant $f(p)$.

We can now express all the coefficients $a_v^{p_1, \dots, p_k}$ in terms of $(\Lambda, \Gamma)$.

\begin{prop}\label{prop_sol_peeling}
For all $p_1, \dots, p_k \geq 1$ and $v \geq 0$, we have
\begin{equation}\label{eqn_sol_peeling}
a^{p_1,\dots,p_k}_v = \E \left[ \Lambda^{v} \Gamma^{k-1} \prod_{i=1}^k C_{p_i}(\Lambda, \Gamma) \right].
\end{equation}
In particular, for all $p,k \geq 1$ and $v \geq 0$, we have
\begin{equation}\label{eqn_apv_intermsof_C}
a^{p, (k-1) \otimes 1}_v = \E \left[ \Lambda^v \Gamma^{k-1} C_p(\Lambda, \Gamma) \right].
\end{equation}
\end{prop}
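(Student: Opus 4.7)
The plan is to introduce the candidate
\[ \widetilde{a}^{p_1,\dots,p_k}_v := \E\!\left[\Lambda^v\Gamma^{k-1}\prod_{i=1}^k C_{p_i}(\Lambda,\Gamma)\right] \]
and show that $\widetilde a = a$. By Lemma~\ref{lem_determined_by_one}, the peeling equations let us express any $a^{p_1,\dots,p_k}_v$ with $\sum_i(p_i-1)>0$ in terms of coefficients with smaller $\sum_i(p_i-1)$, so the family $(a^{p_1,\dots,p_k}_v)$ is determined by the all-ones values $(a^{k\otimes 1}_v)_{k\geq 1,v\geq 0}$ together with the peeling relations. It is therefore enough to check two things: that $\widetilde a$ matches $a$ on the all-ones boundary, and that $\widetilde a$ satisfies the peeling equation.

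The boundary check is immediate: since $C_1(\lambda,\gamma)=1$, when $p_1=\cdots=p_k=1$ the formula collapses to $\E[\Lambda^v\Gamma^{k-1}]$, which equals $a^{k\otimes 1}_v$ by Lemma~\ref{lem_apv_as_moments}. For the peeling equation, I plug $\widetilde a$ into each of the three terms on the right-hand side and factor the common weight $\Lambda^v\Gamma^{k-1}\prod_{i\geq 2}C_{p_i}(\Lambda,\Gamma)$ out of the expectation. The first term gives an inner factor $C_{p_1+1}(\Lambda,\Gamma)$; carrying the $j$-summation inside the expectation, the double sum produces $2\sum_{i=0}^{p_1-1}Z_{i+1}(\Lambda)\,C_{p_1-i}(\Lambda,\Gamma)$; the splitting sum produces an extra factor of $\Gamma$ (since the number of holes goes from $k$ to $k+1$) and contributes $\Gamma\sum_{i=0}^{p_1-1}C_{i+1}(\Lambda,\Gamma)C_{p_1-i}(\Lambda,\Gamma)$. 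The defining recursion~\eqref{eqn_peeling_Cp} for $C_{p_1}(\Lambda,\Gamma)$ is exactly what collapses these three inner factors into a single $C_{p_1}(\Lambda,\Gamma)$, giving back $\widetilde a^{p_1,\dots,p_k}_v$.

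The one place requiring a little care — and the only real obstacle — is the interchange of expectation and the infinite $j$-sum in the middle term. This is handled by Fubini--Tonelli: all integrands are nonnegative, each $C_{p_i}(\Lambda,\Gamma)$ is bounded by the deterministic constant $f(p_i)$ noted after~\eqref{eqn_peeling_Cp}, and $\sum_{j\geq 0}\tau_j(i+1)\E[\Lambda^{v+j}]\leq\E[Z_{i+1}(\Lambda)]$, which is finite because Lemma~\ref{lem_apv_as_moments} gives $\Lambda\leq\lambda_c$ almost surely and Section~\ref{subsec_combinatorics} ensures $Z_{i+1}(\lambda_c)<\infty$.

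Finally, the second formula~\eqref{eqn_apv_intermsof_C} is the specialization of~\eqref{eqn_sol_peeling} to $p_1=p$ and $p_2=\cdots=p_k=1$, using $C_1(\Lambda,\Gamma)=1$ to kill all but one factor in the product.
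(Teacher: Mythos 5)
Your proof is correct and follows essentially the same route as the paper: check the candidate on the all-ones boundary using $C_1=1$ and Lemma~\ref{lem_apv_as_moments}, verify it satisfies the peeling equation via the defining recursion~\eqref{eqn_peeling_Cp}, and conclude by Lemma~\ref{lem_determined_by_one}. One small slip: the integrands in the $j$-sum are \emph{not} nonnegative (the $C_{p_i}(\Lambda,\Gamma)$ can be negative, which is the whole point of Proposition~\ref{prop_cnegative_deterministic}), so the interchange is Fubini via absolute integrability rather than Tonelli via nonnegativity; the absolute integrability follows from the bound $\left| C_{p_i}(\Lambda,\Gamma) \right| \leq f(p_i)$ and $\sum_{j\geq 0}\tau_j(i+1)\E\left[\Lambda^{v+j}\right] \leq \E\left[Z_{i+1}(\Lambda)\right] < \infty$ that you already note.
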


\begin{proof}
For all $(\lambda, \gamma) \in [0,\lambda_c] \times [0,1]$, we define
\[a^{p_1,\dots,p_k}_v(\lambda, \gamma) = \lambda^{v} \gamma^{k-1} \prod_{i=1}^k C_{p_i}(\lambda, \gamma).\]
Using~\eqref{eqn_peeling_Cp}, it is easy to check that $\left( a_v^{p_1, \dots, p_k}(\lambda, \gamma) \right)$ is a solution to the peeling equations, with $a_v^{k \otimes 1}(\lambda, \gamma)=\lambda^v \gamma^{k-1}$. By linearity of the peeling equations, it follows that the right-hand side of~\eqref{eqn_sol_peeling} is also a solution. Therefore, both sides of~\eqref{eqn_sol_peeling} are solutions to the peeling equations. By Lemma~\ref{lem_apv_as_moments}, they coincide for $p_1=\dots=p_k=1$. Therefore, by Lemma~\ref{lem_determined_by_one}, both sides coincide everywhere. Finally,~\eqref{eqn_apv_intermsof_C} is obtained by taking $p_2=\dots=p_k=1$.
\end{proof}

\subsection{A Markovian triangulation is either degenerate or one-ended}\label{subsec_degenerate_or_oneended}

The next step of the proof is to show that almost surely, either $\Gamma=0$ (which can be interpreted as $T$ being one-ended), or $\Lambda=0$ (which can be interpreted as $T$ being degenerate, i.e. with infinite vertex degrees). We will do so by proving that if this is not true, then one of the coefficients $a_v^{p,(k-1) \otimes 1}$ is negative. We first consider the case where $(\Lambda, \Gamma)$ is deterministic.

\begin{prop}\label{prop_cnegative_deterministic}
If $\lambda \in (0,\lambda_c]$ and $\gamma \in (0,1]$, then there is $p \geq 1$ such that $C_p(\lambda,\gamma) <0$.
\end{prop}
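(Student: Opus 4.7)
The plan is to convert the recurrence~\eqref{eqn_peeling_Cp} into an algebraic equation for the generating function $F(x) := \sum_{p \geq 1} C_p(\lambda, \gamma)\, x^p$ and derive a contradiction from the hypothesis that all $C_p(\lambda,\gamma)$ are nonnegative. Multiplying~\eqref{eqn_peeling_Cp} by $x^p$ and summing over $p \geq 1$ yields
\[ \gamma F(x)^2 + A(x) F(x) = x, \qquad A(x) := 1 - x + 2 \mathcal{Z}_\lambda(x). \]
Using~\eqref{eqn_computation_Z} the ``$-1+x$'' part of $2\mathcal{Z}_\lambda$ cancels the ``$1-x$'' in $A$, producing the clean factorisation $A(x) = (1 - \alpha x)\sqrt{1 - \beta x}$ with $\alpha := 1/\sqrt{1+8h}$, $\beta := 4h/\sqrt{1+8h}$ and $x_c := 1/\beta$. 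Since $F(0)=0$ we must take
\[ F(x) = \frac{-A(x) + \sqrt{A(x)^2 + 4\gamma x}}{2\gamma}, \]
which, because $A^2 + 4\gamma x > 0$ on $[0,x_c]$, is real-analytic on $[0,x_c)$ and continuous up to $F(x_c) = \sqrt{x_c/\gamma}$.

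Assume by contradiction that $C_p(\lambda,\gamma) \geq 0$ for every $p \geq 1$. Pringsheim's theorem then forces the radius of convergence of $F$ to be exactly $x_c$: the only complex singularities of the formula inside $|z|<x_c$ come from zeros of $A(z)^2 + 4\gamma z$, none of which lie on the positive real axis (since $A^2 + 4\gamma x > 0$ there), so suitable branch cuts of $\sqrt{A^2+4\gamma z}$ make $F$ complex-analytic in a neighborhood of every real point of $(0,x_c)$; a radius $R<x_c$ would produce an analytic continuation of $F$ past $R$, contradicting Pringsheim, while a radius larger than $x_c$ is impossible because $F$ inherits the branch point of $\sqrt{1-\beta x}$ at $x_c$. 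Hence every derivative $F^{(k)}(x) = \sum_{p \geq k} p(p-1)\cdots(p-k+1)\, C_p\, x^{p-k}$ is nonnegative on $[0,x_c)$. The contradiction will come from computing the leading behaviour of $F$ near $x_c$ and finding some derivative that tends to $-\infty$.

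If $\lambda \in (0,\lambda_c)$, then $h<1/4$ gives $\alpha > \beta$. Setting $u := 1 - \beta x$, one has $A(x) = (1 - \alpha/\beta)\sqrt{u} + O(u^{3/2})$ with $1 - \alpha/\beta < 0$, and $A^2 + 4\gamma x = 4\gamma x_c + O(u)$, so
\[ F(x) = \sqrt{x_c/\gamma} \;+\; \frac{\alpha/\beta - 1}{2\gamma}\sqrt{u} \;+\; O(u). \]
Differentiating in $x$ (using $du/dx = -\beta$) yields $F'(x) = -\frac{\beta(\alpha/\beta - 1)}{4\gamma}\, u^{-1/2} + O(1) \to -\infty$ as $x \to x_c^-$, contradicting $F' \geq 0$ on $[0,x_c)$. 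If instead $\lambda = \lambda_c$, then $\alpha = \beta = 1/\sqrt{3}$ and $A(x) = (1 - x/\sqrt{3})^{3/2}$. With $u := 1 - x/\sqrt{3}$, a Taylor expansion of the explicit formula gives
\[ F(x) = \sqrt{x_c/\gamma} \;-\; \frac{\sqrt{x_c/\gamma}}{2}\, u \;-\; \frac{u^{3/2}}{2\gamma} \;+\; O(u^2). \]
Now the $\sqrt u$ contribution has vanished (so $F'$ stays bounded near $x_c$), but the $u^{3/2}$ term makes $F''(x) = -\frac{1}{8\gamma}\, u^{-1/2} + O(1) \to -\infty$, contradicting $F'' \geq 0$ on $[0,x_c)$. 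In either case some $C_p(\lambda,\gamma)$ must be negative.

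The main obstacle is the critical case $\lambda = \lambda_c$: the first-derivative argument of the subcritical regime fails because the exponents in the factorisation $A = (1-\alpha x)\sqrt{1-\beta x}$ conspire (when $\alpha = \beta$) to suppress the $\sqrt u$ contribution to $F$, so the negativity only surfaces at the second derivative via the $u^{3/2}$ term inherited from $-A$. A secondary obstacle is justifying rigorously that the radius of convergence of $F$ under the positivity hypothesis really equals $x_c$, rather than being dragged down by the complex zeros of $A(z)^2 + 4\gamma z$ that may lie strictly inside $|z| < x_c$; this is precisely where Pringsheim's theorem on series with nonnegative coefficients is essential.
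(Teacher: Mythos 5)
Your proof is correct and follows essentially the same route as the paper: write the quadratic for the generating function $\mathcal{C}_{\lambda,\gamma}(x)$, solve it explicitly using the closed form of $\mathcal{Z}_\lambda$, invoke Pringsheim to pin the radius of convergence at $x_c=\sqrt{1+8h}/(4h)$, and derive a contradiction from $\mathcal{C}'\to-\infty$ (subcritical) or $\mathcal{C}''\to-\infty$ (critical) at $x_c$. Your treatment of the Pringsheim step is in fact a little more careful than the paper's, explicitly ruling out interference from possible complex zeros of $A(z)^2+4\gamma z$ inside the disc.
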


\begin{proof}
We fix $\lambda, \gamma>0$ throughout the proof and write $\mathcal{C}_{\lambda, \gamma}(x)=\sum_{p \geq 1} C_p(\lambda,\gamma) x^p$.
Then the recursion~\eqref{eqn_peeling_Cp} on $C_p$ becomes
\begin{equation}\label{eqn_genfunction_Cp}
\mathcal{C}_{\lambda, \gamma}(x)=\frac{1}{x} \left( \mathcal{C}_{\lambda, \gamma}(x)-x \right) + \frac{2}{x} \mathcal{Z}_{\lambda}(x) \mathcal{C}_{\lambda, \gamma}(x) + \frac{\gamma}{x} \mathcal{C}_{\lambda, \gamma}(x)^2,
\end{equation}
where $\mathcal{Z}(x)=\sum_{p \geq 1} Z_p(\lambda) x^p$ is given by~\eqref{eqn_computation_Z}.
After solving the quadratic equation, we find
\[\mathcal{C}_{\lambda, \gamma}(x)=\frac{1}{2\gamma} \left( \sqrt{\left( 1-\frac{x}{\sqrt{1+8h}} \right)^2 \left( 1-\frac{4h}{\sqrt{1+8h}}x \right) +4\gamma x} -\left( 1-\frac{x}{\sqrt{1+8h}} \right) \sqrt{1-\frac{4h}{\sqrt{1+8h}}x} \right),  \]
where $h \in [0,1/4]$ is given by~\eqref{eqn_defn_h}.

Now assume that all the $C_p(\lambda, \gamma)$ are nonnegative. By the Pringsheim theorem, the radius of convergence of $\left( C_p(\lambda,\gamma) \right)$ is equal to the first nonnegative singularity of $\mathcal{C}_{\lambda,\gamma}$. On the other hand,
this function has a singularity at $x=\frac{\sqrt{1+8h}}{4h}$. Moreover, for $x < \frac{\sqrt{1+8h}}{4h}$, the inside of the first square root is clearly positive, so the radius of convergence of $\left( C_p(\lambda, \gamma) \right)$ is $\frac{\sqrt{1+8h}}{4h}$. However, if $h<1/4$ (i.e. $\lambda<\lambda_c$), then we can compute:
\[ \lim_{x \to \frac{\sqrt{1+8h}}{4h}} \mathcal{C}_{\lambda,\gamma}'(x) = -\infty,\]
which is a contradiction. Similarly, in the critical case $h=1/4$, the radius of convergence is $\sqrt{3}$ and
\[\lim_{x \to \sqrt{3}} \mathcal{C}_{\lambda,\gamma}''(x) = -\infty.\]
\end{proof}

\begin{prop}\label{prop_cnegative_random}
With the notation of Lemma~\ref{lem_apv_as_moments}, we have almost surely either $\Lambda=0$ or $\Gamma=0$.
\end{prop}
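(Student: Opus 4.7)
The plan is to exploit the nonnegativity not only of the raw probabilities $a^{p_1,\dots,p_k}_v$ but of all their iterated finite differences in $v$ and $k$. Combined with a two-dimensional Hausdorff moment problem, this will show that for every fixed $p \geq 1$, one has $C_p(\Lambda,\Gamma) \geq 0$ almost surely, after which Proposition~\ref{prop_cnegative_deterministic} immediately forces $\P(\Lambda > 0, \, \Gamma > 0) = 0$.

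Concretely, I would first observe that inequality \eqref{eqn_derivatives_nonnegative} established inside the proof of Lemma~\ref{lem_apv_as_moments} holds for \emph{arbitrary} perimeters $p_1,\dots,p_k \geq 1$, not just for $p_1 = \cdots = p_k = 1$. Specializing to $p_1 = p$ and $p_2 = \cdots = p_k = 1$ and combining with \eqref{eqn_apv_intermsof_C} yields
\[ \E\!\left[ \Lambda^v (1-\Lambda)^m \, \Gamma^{k-1} (1-\Gamma)^n \, C_p(\Lambda,\Gamma) \right] \; \geq \; 0 \]
for every $m,n,v \geq 0$ and $k,p \geq 1$. Now fix $p \geq 1$ and consider the finite signed measure $\nu_p$ on $[0,1]^2$ defined by $d\nu_p(\lambda,\gamma) = C_p(\lambda,\gamma)\,d\mu(\lambda,\gamma)$, where $\mu$ is the joint law of $(\Lambda,\Gamma)$, supported on $[0,\lambda_c] \times [0,1] \subset [0,1]^2$ (note that $C_p$ is continuous, hence bounded, on this compact set). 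The displayed inequalities say that every Hausdorff-type moment $\int \lambda^v (1-\lambda)^m \gamma^{k-1}(1-\gamma)^n \, d\nu_p$ is nonnegative. By the bivariate Hausdorff moment problem~\cite{HS33}, there is a \emph{unique} nonnegative measure on $[0,1]^2$ carrying these polynomial moments; since finite signed Radon measures on the compact set $[0,1]^2$ are determined by their polynomial moments (via Stone--Weierstrass density of polynomials in $C([0,1]^2)$), $\nu_p$ must coincide with this nonnegative measure, and in particular $\nu_p \geq 0$.

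This yields $C_p(\Lambda,\Gamma) \geq 0$ almost surely for each $p$, and a countable intersection gives almost surely $C_p(\Lambda,\Gamma) \geq 0$ for \emph{every} $p \geq 1$ simultaneously. Proposition~\ref{prop_cnegative_deterministic} forbids this as soon as $(\Lambda,\Gamma) \in (0,\lambda_c] \times (0,1]$, so almost surely $\Lambda = 0$ or $\Gamma = 0$, which is the desired conclusion. The only slightly delicate step in the whole argument is the upgrade from ``all Hausdorff moments of $\nu_p$ are nonnegative'' to ``$\nu_p$ is a nonnegative measure''; this is the place where determinacy of the Hausdorff moment problem on a compact domain is used, but nothing beyond Stone--Weierstrass is required. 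The rest of the proof is essentially a bookkeeping exercise that repackages Proposition~\ref{prop_cnegative_deterministic} through the moment machinery.
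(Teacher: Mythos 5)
Your proof is correct, and it takes a genuinely different route from the paper's. The paper proves the proposition by a concentration argument around a maximizer: it chooses $\alpha\ge 1$ so that $\lambda\gamma^\alpha$ has a unique maximizer $(\lambda_0,\gamma_0)$ on the support $K$, picks $p_0$ with $C_{p_0}(\lambda_0,\gamma_0)<0$ via Proposition~\ref{prop_cnegative_deterministic}, and then shows that $a_v^{p_0,(k-1)\otimes 1}=\E[\Lambda^v\Gamma^{k-1}C_{p_0}(\Lambda,\Gamma)]<0$ for $k=\lfloor\alpha v\rfloor+1$ and $v$ large, because the positive contribution away from $(\lambda_0,\gamma_0)$ is exponentially suppressed. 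You instead re-invoke the Hausdorff moment machinery on the signed measure $\nu_p=C_p\,d\mu$: the general form of~\eqref{eqn_derivatives_nonnegative} together with~\eqref{eqn_apv_intermsof_C} gives $\E[\Lambda^v(1-\Lambda)^m\Gamma^{k-1}(1-\Gamma)^nC_p(\Lambda,\Gamma)]\ge 0$ (and the computation of the iterated differences $\Delta_v^m\Delta_k^n$ of $a_v^{p,(k-1)\otimes 1}$ indeed produces exactly these quantities), which is the complete monotonicity hypothesis of Hausdorff/Hildebrandt--Schoenberg; by moment determinacy on the compact square, $\nu_p$ equals the nonnegative measure produced by the moment theorem, i.e., $C_p(\Lambda,\Gamma)\ge 0$ a.s.\ for each $p$, and then Proposition~\ref{prop_cnegative_deterministic} gives the conclusion. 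Your route is more modular and avoids both the maximizer trick and the explicit concentration estimates; its intermediate output, the pointwise a.s.\ sign condition $C_p(\Lambda,\Gamma)\ge 0$ for all $p$, is stronger than the stated proposition and would also, combined directly with Lemma~\ref{lem_cnegative_deter_lambdazero}, dispose of the residual case $\Lambda=0$, $0<\Gamma<1$ without a separate argument in Section~\ref{subsec_end_proof}. The trade-off is that the paper's proof is self-contained modulo one use of the moment problem (in Lemma~\ref{lem_apv_as_moments}) and then proceeds by hand, whereas yours leans a second time on the moment theorem and on determinacy via Stone--Weierstrass; both of these uses are standard and legitimate here, since $C_p$ is continuous (hence bounded) on $[0,\lambda_c]\times[0,1]$ and the support is compact, so no gaps arise.
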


\begin{proof}
Assume that $\P \left( \Lambda > 0, \Gamma > 0 \right)>0$. Since the coefficients $a_v^{p_1, \dots, p_k}$ must be nonnegative, by~\eqref{eqn_apv_intermsof_C}, it is sufficient to find $p,k \geq 1$ and $v \geq 0$ such that
\begin{equation}\label{eqn_negative_expectation}
\E \left[ \Lambda^v \Gamma^{k-1} C_p(\Lambda, \Gamma) \right] <0.
\end{equation}

Let $K$ be the compact support of the law of $(\Lambda, \Gamma)$. We first claim that there is a coefficient $\alpha \geq 1$ such that the quantity $\lambda \times \gamma^{\alpha}$ has a unique maximizer in $K$. Indeed, we write
\[ \log K = \left\{ \left( \log \lambda, \log \gamma \right) | \lambda>0, \gamma>0, (\lambda, \gamma) \in K \right\}.\]
This is nonempty by assumption, so the convex hull $\overline{\log K}$ of $\log K$ is a nonempty convex subset of $\left( \mathbb{R}^- \right)^2$. Its boundary $\partial \overline{\log K}$ contains at most a countable number of nontrivial segments. Therefore, there is a vector $(1,\alpha)$ with $\alpha \geq 1$ such that $\partial \overline{\log K}$ contains no segment orthogonal to $(1,\alpha)$. This implies that $x+\alpha y$ has a unique maximizer in $\log K$, so $\lambda \times \gamma^{\alpha}$ has a unique maximizer in $K$. We denote by $(\lambda_0,\gamma_0)$ this maximizer.

By Proposition~\ref{prop_cnegative_deterministic}, there is $p_0 \geq 1$ such that $C_{p_0} (\lambda_0, \gamma_0) <0$. The idea will be that if we take $p=p_0$ and $k=\alpha v$ and let $v$ go to infinity, then the mass of the expectation~\eqref{eqn_negative_expectation} is concentrated close to $(\Lambda, \Gamma)=(\lambda_0,\gamma_0)$.

More precisely, we denote by $B_{\eps}(\lambda_0, \gamma_0)$ the ball of radius $\eps$ around $(\lambda_0,\gamma_0)$ in $\R^2$. By continuity (see the remark just before Proposition~\ref{prop_sol_peeling}), we fix $\eps>0$ such that, for all $(\lambda, \gamma) \in B_{\eps}(\lambda_0,\gamma_0)$, we have
\[C_{p_0}(\lambda, \gamma) \leq -\eps.\]
By definition of $(\lambda_0, \gamma_0)$ as a unique maximizer, there is $\delta>0$ such that, if $(\lambda, \gamma) \in K \backslash B_{\eps}(\lambda_0, \gamma_0)$, then $\lambda \gamma^{\alpha} < \lambda_0 \gamma_0^{\alpha}-\delta$.
We can now rewrite the expectation~\eqref{eqn_negative_expectation} for $p=p_0$ as
\begin{equation}\label{eqn_expectation_splitin2}
\E \left[ \Lambda^v \Gamma^{k-1} C_{p_0}(\Lambda, \Gamma) \mathbbm{1}_{\Lambda \Gamma^{\alpha} \geq \lambda_0 \gamma_0^{\alpha}-\delta} \right] + \E \left[ \Lambda^v \Gamma^{k-1} C_{p_0}(\Lambda, \Gamma) \mathbbm{1}_{\Lambda \Gamma^{\alpha} < \lambda_0 \gamma_0^{\alpha}-\delta} \right].
\end{equation}
By definition of $\delta$, if the inside of the first expectation is nonzero, then $(\Lambda, \Gamma) \in B_{\eps}(\lambda_0,\gamma_0)$, so $C_{p_0}(\Lambda, \Gamma)<0$. Therefore, the inside of the first expectation is nonpositive, so we can bound it from above by
\[ \E \left[ \Lambda^v \Gamma^{k-1} C_{p_0}(\Lambda, \Gamma) \mathbbm{1}_{\Lambda \Gamma^{\alpha} \geq \lambda_0 \gamma_0^{\alpha}-\delta/2} \right]. \]
We now take $k = \lfloor \alpha v \rfloor+1$, so that $\Lambda^v \Gamma^{k-1} \geq (\Lambda \Gamma^{\alpha})^v$. Then this first term can be bounded from below by
\[ -\eps \left( \lambda_0 \gamma_0^{\alpha}-\frac{\delta}{2} \right)^v \P \left( \Lambda \Gamma^{\alpha} \geq \lambda_0 \gamma_0^{\alpha}-\delta/2 \right), \]
where the probability is positive by definition of the support $K$.

We move on to the second term of~\eqref{eqn_expectation_splitin2}, again with $k = \lfloor \alpha v \rfloor+1$. Since $\alpha \geq 1$, we have $\Lambda^v \Gamma^{\lfloor \alpha v \rfloor} \leq \left( \Lambda \Gamma^{\alpha} \right)^{(v-1)}$, so the second term can be bounded by
\[ \E \left[ \left( \Lambda \Gamma^{\alpha} \right)^{v-1} \left| C_p(\Lambda, \Gamma) \right| \mathbbm{1}_{\Lambda \Gamma^{\alpha} < \lambda_0 \gamma_0^{\alpha}-\delta} \right] \leq \left( \lambda_0 \gamma_0^{\alpha} -\delta \right)^{v-1} f(p_0), \]
where $f(p_0)$ is a bound on $|C_{p_0}(\lambda, \gamma)|$ for all $\lambda, \gamma$ (see the discussion right before Proposition~\ref{prop_sol_peeling}). Combining the last two displays and letting $v \to +\infty$, we get the result.
\end{proof}

\subsection{End of the proof: degenerate triangulations}\label{subsec_end_proof}

Finally, we have to treat the cases $\Lambda=0$ and $\Gamma=0$. The second one corresponds to the one-ended PSHT, so we need to focus on the first. We will prove that it is not possible to have $\Lambda=0$ but $0<\Gamma<1$. The proof will be very similar to the previous argument (Proposition~\ref{prop_cnegative_random}), with the difference that this time, it is not sufficient to look at the maximum of the support of $\Gamma$, since it is possible that $\Gamma=1$. Therefore, we will first argue that either $\Gamma=1$, or $0 \leq \Gamma \leq \frac{1}{2}$, and then consider the maximum of the support of $\Gamma$ minus $1$. As in Section~\ref{subsec_degenerate_or_oneended}, we will start with the case where $(\lambda,\gamma)$ is deterministic.

\begin{lem}\label{lem_cnegative_deter_lambdazero}
	For all $0<\gamma<1$, there is $p \geq 1$ such that $C_p(0,\gamma)<0$. Moreover, if $\frac{1}{2}<\gamma<1$, we can take $p=3$.
\end{lem}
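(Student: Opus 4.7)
\begin{idproof}
The plan is to exploit the drastic simplification that occurs when $\lambda = 0$. Since $\mathcal{Z}_0 \equiv 0$, and hence $Z_p(0) = 0$ for every $p \geq 1$, the recursion~\eqref{eqn_peeling_Cp} collapses to
\[ C_p(0,\gamma) = C_{p+1}(0,\gamma) + \gamma \sum_{i=0}^{p-1} C_{i+1}(0,\gamma) \, C_{p-i}(0,\gamma), \qquad C_1(0,\gamma) = 1. \]

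I would first dispatch the $p = 3$ claim, which is purely computational: the recursion at $p = 1$ gives $C_2(0,\gamma) = 1 - \gamma$, and then at $p = 2$ one immediately reads off $C_3(0,\gamma) = C_2(0,\gamma) - 2\gamma(1-\gamma) = (1-\gamma)(1-2\gamma)$, which is strictly negative precisely when $\gamma \in (1/2, 1)$.

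The general statement is the interesting one, and the plan is to combine a generating-function computation with Pringsheim's theorem. Setting $\mathcal{C}(x) := \sum_{p \geq 1} C_p(0,\gamma) \, x^p$, the simplified recursion becomes the quadratic identity
\[ \gamma \, \mathcal{C}(x)^2 + (1 - x) \, \mathcal{C}(x) - x = 0, \]
whose branch vanishing at $x = 0$ is
\[ \mathcal{C}(x) = \frac{(x - 1) + \sqrt{(1-x)^2 + 4\gamma x}}{2\gamma}. \]
The key observation is that the discriminant $(1-x)^2 + 4\gamma x = x^2 + (4\gamma - 2) x + 1$ has real discriminant $16\gamma(\gamma - 1) < 0$ for $0 < \gamma < 1$, so its zeros are complex conjugate numbers $x_{\pm} = (1 - 2\gamma) \pm 2 i \sqrt{\gamma(1-\gamma)}$, both of modulus exactly $1$ and neither on the real axis. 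In particular $\mathcal{C}$ is analytic on a complex neighbourhood of the whole segment $[0, 1]$ — at $x = 1$ the discriminant equals $4\gamma > 0$ — while the branch points $x_{\pm}$ obstruct any single-valued analytic extension across the unit circle. Hence the Taylor series of $\mathcal{C}$ at $0$ has radius of convergence exactly $R = 1$. If every $C_p(0,\gamma)$ were nonnegative, Pringsheim's theorem would force $x = R = 1$ to be a singular point of $\mathcal{C}$, which contradicts the analyticity established above, and we conclude that some $C_p(0,\gamma) < 0$.

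The only real point requiring care is pinning down $R = 1$: the bound $R \geq 1$ follows from analyticity of $\mathcal{C}$ in the open unit disk (where the discriminant doesn't vanish, so a single-valued analytic branch of the square root exists), while for $R \leq 1$ one uses that $(1-x)^2 + 4\gamma x$ has \emph{simple} zeros at $x_{\pm}$, so $\mathcal{C}$ genuinely has square-root branch-point singularities there, ruling out an analytic extension of the power series to any disk strictly containing the closed unit disk.
\end{idproof}
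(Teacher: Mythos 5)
Your proof is correct but takes a genuinely different route from the paper's for the general case $0 < \gamma < 1$. Both arguments derive the same closed form $\mathcal{C}_{0,\gamma}(x) = \frac{1}{2\gamma}\left(\sqrt{(1-x)^2 + 4\gamma x} - (1-x)\right)$ and note that its only singularities are the conjugate branch points $x_\pm = (1-2\gamma) \pm 2i\sqrt{\gamma(1-\gamma)}$ of modulus $1$ off the real axis, so that the power series has radius of convergence $R \geq 1$. From there you pin down $R = 1$ exactly (via non-removability of the square-root branch points) and invoke Pringsheim to contradict the analyticity of $\mathcal{C}_{0,\gamma}$ at $x = 1$. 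The paper instead stops at $R \geq 1$, computes the third derivative
\[ \mathcal{C}_{0,\gamma}'''(x) = \frac{6(1-\gamma)(1-2\gamma-x)}{\left((1-x)^2 + 4\gamma x\right)^{5/2}}, \]
and notes that this is negative for real $x$ near $1$, contradicting nonnegativity of the coefficients since the series $\sum_{p \geq 3} p(p-1)(p-2) C_p(0,\gamma) x^{p-3}$ converges and is termwise nonnegative on $(0,1)$ if all $C_p(0,\gamma) \geq 0$. The paper's route is slightly more economical in that it never needs the upper bound $R \leq 1$, whereas your route is conceptually closer to the Pringsheim-plus-singularity argument that the paper itself uses in Proposition~\ref{prop_cnegative_deterministic} for the $\lambda > 0$ case, where the obstructing singularity lies on the positive real axis. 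Your $p = 3$ computation coincides with the paper's, and the overall conclusion is the same.
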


\begin{proof}
We first consider the case $\lambda=0$ in the induction~\eqref{eqn_peeling_Cp}. Using $Z_{i+1}(0)=0$, we obtain	
\[C_p(0,\gamma) = C_{p+1}(0,\gamma) + \gamma \sum_{i=0}^{p-1} C_{i+1}(0, \gamma) C_{p-i}(0, \gamma). \]
In particular, using the cases $p=1$ and $p=2$, since $C_1(0,\gamma)=1$, we must have $C_2(0,\gamma)=1-\gamma$ and
\begin{equation}\label{eqn_p_equal_to_3}
C_3(0,\gamma)=(1-\gamma)(1-2\gamma).
\end{equation}
In particular, we have $C_3(0,\gamma)<0$ as soon as $\frac{1}{2} < \gamma <1$, which proves the second point.

On the other hand, we recall that $\mathcal{C}_{\lambda,\gamma}(x)=\sum_{p \geq 1} C_p(\lambda,\gamma) x^p$. By solving~\eqref{eqn_genfunction_Cp} and using $\mathcal{Z}_0(x)=0$, we get:
\[ \mathcal{C}_{0,\gamma}(x)= \begin{cases}
\frac{1}{2\gamma} \left( \sqrt{(1-x)^2+4\gamma x}-(1-x) \right) & \mbox{if $\gamma>0$,} \label{eqn_gf_lambdazero_explicit}\\
\frac{x}{1-x} & \mbox{if $\gamma=0$.}
\end{cases}
\]
In particular, the generating function $\mathcal{C}_{0,\gamma}$ has at most two singularities, which are conjugate complex numbers of modulus $1$. Therefore, the radius of convergence of $\left(  C_p(0,\gamma) \right)_{p \geq 1}$ is at least one.

On the other hand, we can compute the third derivative for $0<x<1$:
\[ \sum_{p \geq 3} p(p-1)(p-2) C_p(0,\gamma) x^{p-3} = \mathcal{C}_{0,\gamma}'''(x) = \frac{6(1-\gamma)(1-2\gamma-x)}{\left( (1-x)^2+4\gamma x \right)^{5/2}}.\]
If $0<\gamma<1$ and $x$ is close enough to $1$, this is negative, which proves the lemma.
\end{proof}

We can now finish the proof.

\begin{proof}[Proof of Theorem~\ref{main_thm}]
As noted right after the proof of Lemma~\ref{lem_apv_as_moments}, all we need to prove is that either $\Gamma=0$ or $(\Lambda, \Gamma)=(0,1)$. Given Proposition~\ref{prop_cnegative_random}, all we have left to prove is that $\P \left( 0<\Gamma<1 \right)=0$.

Now let $p \geq 1$ and $k \geq 1$. By~\eqref{eqn_apv_intermsof_C}, we have
\[a_0^{p, k \otimes 1} = \E \left[ \Gamma^{k} C_p(\Lambda, \Gamma) \right] = \E \left[ \Gamma^{k} C_p(0, \Gamma) \right],\]
where the last equality comes from the fact that if $\Gamma^{k} C_p(\Lambda, \Gamma) \ne 0$, then $\Gamma>0$ so $\Lambda=0$ by Proposition~\ref{prop_cnegative_random}. It is sufficient to prove that if $0<\Gamma<1$ with positive probability, then we can find $p,k \geq 1$ such that $\E \left[ \Gamma^{k} C_p(0, \Gamma) \right]<0$.

For $p=3$, by~\eqref{eqn_p_equal_to_3} we get
\[a_0^{3, k \otimes 1} = \E \left[ \Gamma^{k} (1-\Gamma)(1-2\Gamma) \right].\]
The quantity in the expectation is negative for $\frac{1}{2}<\Gamma<1$ and vanishes for $\Gamma=1$, so if $\P \left( \frac{1}{2} < \Gamma < 1 \right)>0$, then we have $a_0^{3, k \otimes 1}<0$ for $k$ large enough, which is not possible. Therefore, the support of $\Gamma$ is included in $\left[ 0,\frac{1}{2} \right] \cup \{1\}$.

Finally, we denote by $\gamma_0$ the maximum of the intersection of the support of $\Gamma$ with the interval $\left[ 0,\frac{1}{2} \right]$. If $\gamma_0>0$, by Lemma~\ref{lem_cnegative_deter_lambdazero}, there is $p_0 \geq 1$ such that $C_{p_0}(0,\gamma_0)<0$. Exactly in the same way as in the proof of Proposition~\ref{prop_cnegative_random}, using the fact that $C_{p_0}(0,\gamma)$ is a continuous function of $\gamma$, we deduce that $\E \left[ \Gamma^{k} C_{p_0}(0, \Gamma) \right]<0$ for $k$ large enough. Therefore, we must have $\gamma_0=0$, which concludes the proof.
\end{proof}

\section{Applications to the convergence of finite models to the UIPT}

\subsection{Triangulations with defects}

\begin{proof}[Proof of Corollary~\ref{cor_triang_with_defects}]
We first argue that any subsequential limit of $M_{\mathbf{f}^n}$ for $d_{\loc}^*$ is almost surely a triangulation.

For this, let $m_n$ be a map with face degrees given by $\mathbf{f}^n$, and recall that $|\mathbf{f}^n|$ is the total number of edges of $m_n$. Let $D(m_n)$ be the set of edges of $m_n$ incident to a face which is not a triangle. By our assumption, we have
\[ \# D(m_n) \leq \sum_{j \ne 3} j f_j^n = o \left( |\mathbf{f}^n| \right). \]
Moreover, for all $r \geq 0$, let $E_r(m_n)$ be the set of edges $e$ such that $B_r^*(m_n,e)$ is not a triangulation with holes. In other words $E_r(m_n)$ is the set of edges $e$ at dual distance at most $r$ from an edge of $D(m_n)$. If $e \in E_r(m_n)$, let $d^e$ be the edge of $D(m_n)$ which is the closest to $e$ for the dual distance (if it is not unique, pick one arbitrarily). Let also $\gamma^e$ be a shortest dual path from $d^e$ to $e$. By minimality of $d^e$, the path $\gamma^e$ is a non-backtracking dual path of length at most $r+1$ containing only triangles. Hence, for each $d^e$, the path $\gamma^e$ can take at most $2^{r+2}$ values. It follows that, for all $r \geq 0$,
\[ \# E_r(m_n) \leq 2^{r+2} \# D(m_n) = o \left( |\mathbf{f}^n| \right). \]
Since $M_{\mathbf{f}^n}$ is invariant under uniform rerooting, this implies that with probability $1-o(1)$, all the internal faces in $B_r^* \left( M_{\mathbf{f}^n} \right)$ are triangles. This is true for all $r \geq 0$, so $M_{\mathbf{f}^n}$ is tight for $d_{\loc}^*$ and any subseqential limit is a triangulation.

Now let $T$ be such a subsequential limit. First $B_r^* \left( M_{\mathbf{f}^n} \right)$ is planar for all $r,n \geq 0$, so $T$ must be planar. Moreover $|\mathbf{f}^n| \to +\infty$, so $T$ is infinite. Finally, for any triangulation $t$ with $k$ holes of perimeters $p_1, \dots,p_k$ and any $n \geq 0$, the probability $\P \left( t \subset M_{\mathbf{f}^n} \right)$ only depends on $n$, on $p_1, \dots, p_k$ and on the inner volume of $t$. Indeed, this probability is given by the number of ways to fill each hole $h_i$ of $t$ with a map $m_i$ of the $p_i$-gon, in such a way that $t \cup \bigcup_{i=1}^k m_i$ has face degrees prescribed by $\mathbf{f}^n$. By letting $n \to +\infty$, it follows that $T$ is Markovian, so by Theorem~\ref{main_thm} $T$ is of the form $\TT_{\Lambda}$, where $\Lambda$ is a random variable with values in $[0,\lambda_c] \cup \{ \star \}$.

Finally, we prove $\Lambda=\lambda_c$ almost surely by considering the mean inverse degree: by the Euler formula, the number of edges and vertices of $M_{\mathbf{f}_n}$ are respectively $|\mathbf{f}^n|$ and $2+\sum_{j \geq 1} (j-2)f_j^n$. Let $\rho_n$ be the root vertex of $M_{\mathbf{f}^n}$. Since $M_{\mathbf{f}^n}$ is invariant under rerooting on a uniform oriented edge, we have
\[ \E \left[ \frac{1}{\deg_{M_{\mathbf{f}^n}}(\rho_n)} \right] = \frac{2+\sum_{j \geq 1} (j-2)f_j^n}{2 |\mathbf{f}^n|} \xrightarrow[n \to +\infty]{} \frac{1}{6}. \]
Moreover, the inverse degree of the root vertex is a bounded, continuous function for $d_{\loc}^*$, so we must have
\begin{equation}\label{eqn_mean_degree}
\E \left[ \frac{1}{\deg_{\TT_{\Lambda}}(\rho)} \right]=\frac{1}{6}.
\end{equation}
For $\lambda \in [0,\lambda_c] \cup \{\star\}$, let $d(\lambda)$ be the expected inverse degree of the root vertex in $\TT_{\lambda}$. It is proved in~\cite[Proposition 20]{BL19} that for $\lambda \in (0,\lambda_c]$, we have $d(\lambda) \leq \frac{1}{6}$ with equality if and only if $\lambda=\lambda_c$. Moreover, it is immediate that $d(0)=d(\star)=0$ (all the vertices have infinite degree), so~\eqref{eqn_mean_degree} implies $\Lambda=\lambda_c$ and $T$ is the UIPT. We have proved that the UIPT is the only subsequential limit of $M_{\mathbf{f}^n}$, so $M_{\mathbf{f}^n} \to \TT_{\lambda_c}$ in distribution for $d_{\loc}^*$. Since the UIPT has finite vertex degrees, this implies convergence for $d_{\loc}$ by Lemma~\ref{lem_dual_convergence}.
\end{proof}

\subsection{Triangulations with moderate genus}

\begin{proof}[Proof of Corollary~\ref{cor_moderate_genus}]
The only part of the proof which differs significantly from the proof of Corollary~\ref{cor_triang_with_defects} is the proof that any subsequential limit of $T_{n,g_n}$ for $d_{\loc}^*$ is planar. If we admit this, since tightness for $d_{\loc}^*$ is immediate, any subsequential limit must be of the form $\TT_{\Lambda}$ by Theorem~\ref{main_thm}. Moreover, by the Euler formula, we have
\[ \E\left[ \frac{1}{\deg_{T_{n,g_n}}(\rho)} \right] = \frac{2-2g_n+n}{6n} \xrightarrow[n \to +\infty]{} \frac{1}{6} \]
and we conclude $\Lambda=\lambda_c$ in the same way as in Corollary~\ref{cor_triang_with_defects}.

To prove that any subsequential limit of $T_{n,g_n}$ for $d^*_{\loc}$ is planar, we need to prove that for all $r \geq 0$, the dual ball $B_r^*(T_{n,g_n})$ is planar with probability $1-o(1)$ as $n \to +\infty$. For all $n \geq 0$, let $t_n$ be a triangulation with $2n$ faces and genus $g_n$. For all $r,n \geq 0$, we denote by $NP_r(t_n)$ the set of edges $e \in t_n$ such that the dual ball $B_r^*(t_n;e)$ of radius $r$ around $e$ is not planar. We will prove that $\# NP_r(t_n)=o(n)$ uniformly in the choice of $t_n$, which is sufficient by invariance of $T_{n,g_n}$ under uniform rerooting.

For this, note that the number of edges at dual distance at most $2r$ from an edge $e$ is bounded by $3^{2r+1}$. Therefore, for any edge $e \in NP_r(t_n)$, there are at most $3^{2r+1}$ edges $e' \in NP_r(t_n)$ such that the balls $B_r^*(t_n;e)$ and $B_r^*(t_n;e')$ intersect. Therefore, we can find a subset $\widetilde{NP}_r(t_n)$ of $NP_r(t_n)$ of size at least $\frac{1}{3^{2r+1}} \# NP_r(t_n)$ such that the balls $B_r^*(t_n ; e)$ for $e \in \widetilde{NP}_r(t_n)$ are edge-disjoint. Each of these balls have genus at least $1$, so
\[¨ \frac{1}{3^{2r+1}} \# NP_r(t_n) \leq \# \widetilde{NP}_r(t_n) \leq g_n=o(n).\]
It follows that $\# NP_r(t_n)=o(n)$, which proves our claim.
\end{proof}

\subsection{Triangulations with high temperature Ising model}

\begin{proof}[Proof of Corollary~\ref{cor_Ising}]
We first note that the distance $d_{\loc}$ extends in a natural way to face-coloured triangulations by setting
\[d_{\loc}\left( (T,\sigma),(T', \sigma') \right) = \left( 1+\min \{ r \geq 0 | \left( B_r(T), \sigma|_{B_r(T)} \right) \ne \left( B_r(T'), \sigma'|_{B_r(T')} \right) \}\right)^{-1},\]
and similarly for $d_{\loc}^*$.
If $T$ is a random face-coloured triangulation and $t$ is triangulation with holes equipped with a colouring $\sigma$ of its internal face, we will write $(t,\sigma) \subset T$ for the event that $t \subset T$ and the colouring of the faces of the neighbourhood of the root of $T$ isomorphic to $t$ agrees with $\sigma$.

We first notice that changing the colour of one face only changes the probability for a face-coloured triangulation to be picked by $T_n[\beta_n]$ by at most a factor $e^{3 |\beta_n|}$. Therefore, let $t$ be a triangulation with holes, and let $\sigma, \sigma'$ be two colourings of its internal faces which differ on only one face. Then we have
\[ e^{-3|\beta_n|} \P \left( (t,\sigma) \subset T_n[\beta_n] \right) \leq \P \left( (t,\sigma') \subset T_n[\beta_n] \right) \leq e^{3|\beta_n|} \P \left( (t,\sigma) \subset T_n[\beta_n] \right).\]
Let $T$ be a subsequential limit of $T_n[\beta_n]$ for $d_{\loc}^*$. By letting $n \to +\infty$ in the last display, we have
\[ \P \left( (t,\sigma) \subset T_n[\beta_n] \right) = \P \left( (t,\sigma') \subset T_n[\beta_n] \right).\]
This implies that the probability $\P \left( (t,\sigma) \subset T_n[\beta_n] \right)$ only depends on $t$ and not on $\sigma$, so
\[ \P \left( (t,\sigma) \subset T \right) = \frac{1}{2^{\# \mathrm{Internal \, faces (t)}}} \P \left( t \subset T \right) \]
for all $t$ and $\sigma$. This means that conditionally on the triangulation $T$, the colours of its faces are just given by Bernoulli face percolation with parameter $\frac{1}{2}$.

On the other hand, for all $n \geq 0$, the probability $\P \left( (t,\sigma) \subset T_n[\beta_n] \right)$ only depends on the perimeters and inner volume of $t$ and on the colours of the internal faces of $t$ which share an edge with a hole. By letting $n$ go to infinity, the same is true for the probability $\P \left( (t,\sigma) \subset T \right)$. But since it does not depend on $\sigma$, this probability actually only depends on the perimeters and inner volume of $t$, so $T$ is Markovian. Since $T$ is also planar, it must be of the form $\TT_{\lambda}$, and we conclude using the mean inverse degree in the same way as in Corollaries~\ref{cor_triang_with_defects} and~\ref{cor_moderate_genus}.
\end{proof}

\section{Large deviations for pattern occurences in uniform triangulations}

The goal of this section is to prove Theorem~\ref{thm_large_deviations}. We recall that if $t_0$ is a finite triangulation with one or several holes, then $\occ_{t_0}(t)$ is the number of occurences of $t_0$ in a triangulation of the sphere $t$. More precisely $\occ_{t_0}(t)$ is the number of oriented edges $\vec{e}$ of $t$ such that $t_0 \subset (t;\vec{e})$, where $(t;\vec{e})$ stands for the triangulation $t$, rerooted at $\vec{e}$.

\begin{proof}[Proof of Theorem~\ref{thm_large_deviations}]
We fix a finite triangulation $t_0$ with one or several holes. Let $\left( \beta_n \right)_{n \geq 1}$ be a sequence of nonnegative numbers such that $\beta_n \to 0$. We denote by $T_n^{(\beta_n)}$ a random triangulation of the sphere with $2n$ faces such that $\P \left( T_n^{(\beta_n)}=t \right)$ is proportional to
\[\exp \left( \beta_n \occ_{t_0}(t) \right).\]
The proof consists of first showing that $T_n^{(\beta_n)}$ converges locally to the UIPT. This means that it is not possible to increase significantly $\occ_{t_0}(T_n)$ by "twisting" the uniform measure in a subexponential way. Therefore, exponential factors are necessary, so triangulations with much more occurences of $t_0$ are exponentially rare.

More precisely, as in the proof of Corollary~\ref{cor_Ising}, in order to show that $T_n^{(\beta_n)}$ converges to the UIPT, the only non-trivial point is to prove that any subsequential limit $T$ of $T_n^{(\beta_n)}$ is Markovian. For this, let $t_1$ and $t_2$ be two finite triangulations with the same hole perimeters and the same inner volume. If $t$ is a triangulation of the sphere such that $t_1 \subset t$, we denote by $\Phi(t)$ the triangulation obtained by replacing $t_1$ by $t_2$ in the neighbourhood of the root. Then $\left|\occ_{t_0}(t)-\occ_{t_0}(\Phi(t)) \right|$ is bounded by a constant $c$ depending only on $t_0$, $t_1$ and $t_2$. It follows that
\[ e^{-c \beta_n} \P \left( T_n^{(\beta_n)}=t \right) \leq \P \left( T_n^{(\beta_n)}=\Phi(t) \right) \leq e^{c \beta_n} \P \left( T_n^{(\beta_n)}=t \right). \]
By summing over $t$, we get
\[ e^{-c \beta_n} \P \left( t_1 \subset T_n^{(\beta_n)} \right) \leq \P \left( t_2 \subset T_n^{(\beta_n)} \right) \leq e^{c \beta_n} \P \left( t_1 \subset T_n^{(\beta_n)} \right). \]
Letting $n \to +\infty$, we obtain $\P \left( t_1 \subset T \right)=\P \left( t_2 \subset T \right)$, so any subsequential limit of $T_n^{(\beta_n)}$ is Markovian. Since it must also be planar and have mean degree $6$, this implies convergence to the UIPT by Theorem~\ref{main_thm}.

We now recall that $T_n$ stands for a uniform triangulation of the sphere with $2n$ faces and write $X_n=\occ_{t_0}(T_n)$ and $X_n^{(\beta_n)}=\occ_{t_0}(T_n^{(\beta_n)})$. We also write $p_0=\P \left( t_0 \subset \TT_{\lambda_c} \right)$. By invariance of $T_n$ and $T_n^{(\beta_n)}$ under uniform rerooting and convergence of both models to the UIPT, we have
\[ \frac{1}{6n} \E \left[ X_n \right] = \P \left( t_0 \subset T_n \right) \xrightarrow[n \to +\infty]{} p_0, \]
\[ \frac{1}{6n} \E \left[ X_n^{(\beta_n)} \right] = \P \left( t_0 \subset T_n^{(\beta_n)} \right) \xrightarrow[n\to +\infty]{} p_0. \]
We now fix $\eps>0$. The last convergence implies
\begin{equation}\label{eqn_xnbeta_small}
\P \left( X_n^{(\beta_n)} \leq  6 p_0 n + \eps n \right) \geq 1-\frac{\E \left[ X_n^{(\beta_n)} \right]}{6p_0 n+\eps n} \xrightarrow[n \to +\infty]{} \frac{\eps}{6p_0+\eps} >0.
\end{equation}
On the other hand, by definition we have
\begin{align*}
\P \left( X_n^{(\beta_n)} \leq 6 p_0 n + \eps n \right) &= \frac{\E \left[ e^{\beta_n X_n} \mathbbm{1}_{X_n \leq 6 p_0 n + \eps n} \right]}{\E \left[ e^{\beta_n X_n} \right]}\\
& \leq \frac{\E \left[ e^{\beta_n X_n} \mathbbm{1}_{X_n \leq 6 p_0 n + \eps n} \right]}{\E \left[ e^{\beta_n X_n} \mathbbm{1}_{X_n \geq 6 p_0 n + 2 \eps n} \right]}\\
& \leq \frac{e^{\beta_n(6p_0n+\eps n)} \P \left( X_n \leq 6p_0n+\eps n \right)}{e^{\beta_n(6p_0n+2\eps n)} \P \left( X_n \geq 6p_0n+2\eps n \right)}\\
& \leq \frac{e^{-\eps \beta_n n}}{\P \left( X_n \geq 6p_0n+2\eps n \right)}.
\end{align*}
Therefore, we have
\[ \P \left( X_n \geq 6p_0n+2\eps n \right) \leq \frac{e^{-\eps \beta_n n}}{\P \left( X_n^{(\beta_n)} \leq 6 p_0 n + \eps n \right)} = O \left( e^{-\eps \beta_n n} \right) \]
by~\eqref{eqn_xnbeta_small}. Since $(\beta_n)$ can be any sequence going to $0$, we have proved that the sequence
\[ \left( -\frac{1}{n} \log \P \left( X_n \geq 6p_0n+2\eps n \right) \right)_{n \geq 0} \]
is eventually larger than any sequence which goes to $0$. Hence, it is bounded from below by a positive constant, which means that $\P \left( X_n \geq 6p_0n+2\eps n \right)$ decays exponentially in $n$. The same bound for $\P \left( X_n \leq 6p_0n-2\eps n \right)$ is proved in the same way, but by biasing by $e^{-\beta_n X_n}$ instead of $e^{\beta_n X_n}$.
\end{proof}

\section{Extensions and conjectures}\label{sec_conj}

There are two natural ways to try to extend Theorem~\ref{main_thm}: the first one is to consider more general classes of planar maps than triangulations, and the second is to remove the planarity assumption. We expect that an analog of Theorem~\ref{main_thm} for more general models of maps should be true. In this context, a Markovian map would be an infinite random map $M$ where $\P \left( m \subset M \right)$ depends only on the perimeters of the holes of $m$, and the family of degrees of its internal faces, as in~\cite[Theorem 4]{BL20}. The extension of the proof of Theorem~\ref{main_thm} does not seem straighforward, since the partition functions are not as explicit as for triangulations (only the pointed partition functions are explicit). Moreover, the extension of the Infinite Boltzmann Planar Maps of~\cite{BL20} to the infinite-degree case would be more difficult, because different ways to let Boltzmann weights go to zero give rise to different degenerate objects.

On the other hand, the non-planar case seems difficult. The natural definition of a Markovian triangulation in the nonplanar case is that $\P \left( t \subset T \right)$ depends only on the volume of $t$, on the perimeters of its holes \emph{and on its genus}. We conjecture the following.

\begin{conj}
${}$
\begin{itemize}
\item
There are Markovian infinite triangulations which are not planar.
\item
There is no Markovian infinite nonplanar triangulation with only finite vertex degrees.
\end{itemize}
\end{conj}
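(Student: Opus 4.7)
The plan is to mirror the strategy of Theorem~\ref{main_thm}, now tracking an extra genus parameter. In the non-planar setting a Markovian triangulation is encoded by coefficients $a^{p_1,\ldots,p_k;g}_v$ depending additionally on a genus $g \geq 0$, and the peeling equations pick up further terms corresponding to the peeled triangle identifying two boundary edges in a topologically nontrivial way (absorbing a handle into the explored region) or distributing a genus splitting between a finite and an infinite component.

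For the existence part, I would try one of two constructions. The direct one is a non-planar analogue of $\TT_{\star}$: starting from the three-loop triangle, at each recursive step replace the planar ``pair of nested loops'' by a uniform random choice among the topologically distinct local gluings of two new loops around a new triangle. By exchangeability of these choices $\P \left( t \subset T \right)$ should depend only on the combinatorial data $(\mathbf{p},v,g)$ of $t$, and a positive density of steps produces handles, so the object is Markovian and of infinite genus. Alternatively, one could take subsequential local limits for $d^*_{\loc}$ of uniform triangulations with $2n$ faces and genus $g_n = \lfloor \alpha n \rfloor$ for fixed $0 < \alpha < 1/2$: such limits are automatically Markovian, and a variant of the Euler-formula argument of Corollary~\ref{cor_moderate_genus} should show that a positive density of dual balls carry nontrivial topology, so the limit cannot be planar.

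For the non-existence part, I would push the approach of Section~2. Using a three-dimensional Hausdorff moment problem, one would write
\[ a^{k \otimes 1; g}_v = \E \left[ \Lambda^v \Gamma^{k-1} \Theta^g \right] \]
for an auxiliary ``handle-rate'' variable $\Theta$, and prove an analogue of Proposition~\ref{prop_sol_peeling} expressing every $a$-coefficient through deformed generating functions $C_p(\lambda,\gamma,\theta)$ satisfying a generalised version of~\eqref{eqn_peeling_Cp} with an additional quadratic term coming from the handle-creating peeling step. One would then try to establish, in the spirit of Proposition~\ref{prop_cnegative_deterministic}, that nonzero $\Theta$ forces some $C_p$ to become negative, unless the underlying triangulation is in a vertex-degenerate regime, and use the finite-vertex-degree hypothesis to exclude this escape.

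The main obstacle is the lack of explicit combinatorial input. The planar argument relies crucially on the algebraic formula~\eqref{eqn_computation_Z} for $\mathcal{Z}_{\lambda}$, which gives the generating function~\eqref{eqn_genfunction_Cp} and its singularity structure in closed form. In higher genus the analogous boundary partition functions are only accessible through topological recursion, so extracting a sign change directly seems out of reach. A more promising route may be to bypass these functions and argue via Euler's formula along the peeling: if every vertex of $T$ has finite degree, then vertices accumulate at a strictly positive rate per peeled triangle, while each handle absorbed during peeling consumes two faces without producing a vertex, which should cap the handle-creation rate. Turning this heuristic into a rigorous contradiction with the presence of any handle in a Markovian $T$ with finite vertex degrees is, in my view, the heart of the proof and the reason the author only states the result as a conjecture.
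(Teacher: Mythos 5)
This statement is a \emph{conjecture}; the paper does not prove it, so there is no proof to compare against. What the paper does is give a heuristic for the first item and explain why the Section~2 strategy breaks down for the second. Measured against that discussion, your sketch has one outright error and one blind spot.

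The error is in your second existence construction. Taking $g_n = \lfloor \alpha n \rfloor$ with $0<\alpha<1/2$ fixed does \emph{not} produce a non-planar limit: the main result of~\cite{BL19} is precisely that in this regime $T_{n,g_n}$ converges to a \emph{planar} PSHT $\TT_{\lambda(\alpha)}$ with $\lambda(\alpha)\in(0,\lambda_c)$. Handles do exist at positive density in $T_{n,g_n}$, but they escape to infinity locally; a fixed dual ball is planar with probability $1-o(1)$. To get a non-planar Markovian limit the paper suggests the near-maximal genus regime $g_n = \frac{n}{2} - o(n)$, where there are only $O(1)$ vertices in total, so handles are forced into every neighbourhood of the root. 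Your first existence construction (a randomly glued non-planar $\TT_{\star}$) is closer in spirit, but you assert exchangeability without argument: it is not clear that the probability of observing a given $t$ depends only on $(\mathbf{p},v,g)$ rather than on finer data such as how the loops of $t$ are nested, so this would need real work.

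For the non-existence item, you correctly identify the first of the paper's two obstructions (no closed form for higher-genus boundary partition functions), but you miss the second, which is the more structural one: in the non-planar setting two different holes of $t$ can be filled by the \emph{same} infinite component, so the peeling equations for $a^{k\otimes 1;g}_v$ and $a^{(k+1)\otimes 1;g}_v$ no longer have parallel lists of terms, and the discrete-derivative trick behind Lemma~\ref{lem_apv_as_moments} (applying $\Delta_k$ to the peeling equation) does not go through. This breaks the Hausdorff moment step itself, before the question of generating functions even arises, so the three-variable moment ansatz $a^{k\otimes 1;g}_v = \E[\Lambda^v\Gamma^{k-1}\Theta^g]$ cannot be established by the method of Section~2. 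Your alternative Euler-formula heuristic is a genuinely different route and plausibly the right direction, but as stated it proves nothing: finite vertex degrees do not by themselves bound the vertex-accumulation rate away from zero (compare $\TT_\lambda$ as $\lambda\downarrow 0$, where degrees are a.s.\ finite but mean inverse degree tends to $0$), so the claimed cap on the handle-creation rate needs a quantitative argument you have not supplied.
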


The motivation for the first point is that uniform triangulations with $2n$ faces and genus $g_n=\frac{n}{2}-o(n)$ should have Markovian local limits for $d_{\loc}^*$. In particular, for all $k \geq 1$, we should obtain a Markovian limit with exactly $k$ vertices for $g_n=\frac{n+2-k}{2}$.
On the other hand, proving the second point of this conjecture would remove the necessity to use the Goulden--Jackson equation in~\cite{BL19}. However, it seems hopeless to prove this second point using the same sketch as for Theorem~\ref{main_thm}, at least for the following two reasons:
\begin{itemize}
	\item the generating function for triangulations with higher genus is not explicitely known;
	\item when we write down the peeling equations for $a_{v,g}^{k \otimes 1}$ and $a_{v,g}^{(k+1) \otimes 1}$ to adapt the proof of Lemma~\ref{lem_apv_as_moments}, the lists of terms appearing in both equations are different because two holes can now be filled with the same component. Therefore, it is not easy anymore to take the discrete derivative $\Delta_k$ of the peeling equation with respect to the number of holes.
\end{itemize}

\bibliographystyle{abbrv}
\bibliography{bibli}

\end{document}